\newtheorem{lem}{Lemma}[section]
\newtheorem{prop}{Proposition}%[section]
\newtheorem{defi}{Definition}[section]
\newtheorem{coro}{Corollary}[section]
\newcommand{\hf}{\hat{f}}
\newtheorem{theo}{Theorem}
\newcommand{\Br}{\Poin}
\newcommand{\Cr}{{\bf Cr}}
\newcommand{\dist}{{\bf dist}}
\newcommand{\diam}{\mbox{diam}\, }
\newcommand{\compose}{\circ}
\newcommand{\dbar}{\bar{\partial}}
\newcommand{\Def}[1]{{{\em #1}}}
\newcommand{\dx}[1]{\frac{\partial #1}{\partial x}}
\newcommand{\dy}[1]{\frac{\partial #1}{\partial y}}
\newcommand{\Res}[2]{{#1}\raisebox{-.4ex}{$\left|\,_{#2}\right.$}}
\newcommand{\sgn}{{\rm sgn}}
\newcommand{\C}{{\bf C}}
\newcommand{\D}{{\bf D}}
\newcommand{\Dm}{{\bf D_-}}
\newcommand{\N}{{\bf N}}
\newcommand{\R}{{\bf R}}
\newcommand{\Z}{{\bf Z}}
\newcommand{\tr}{\mbox{Tr}\,}
\newenvironment{nproof}[1]{\trivlist\item[\hskip \labelsep{\bf Proof{#1}.}]}
{\begin{flushright} $\square$\end{flushright}\endtrivlist}
\newenvironment{block}[1]{\trivlist\item[\hskip \labelsep{{#1}.}]}{\endtrivlist}
\newtheorem{com}{Comment}
\font\mathfonta=msam10 at 11pt
\font\mathfontb=msbm10 at 11pt
\def\Bbb#1{\mbox{\mathfontb #1}}
\def\lesssim{\mbox{\mathfonta.}}
\def\suppset{\mbox{\mathfonta{c}}}
\def\subbset{\mbox{\mathfonta{b}}}
\def\grtsim{\mbox{\mathfonta\&}}
\def\gtrsim{\mbox{\mathfonta\&}}
\newcommand{\ar}{{\bf area}}
\newcommand{\1}{{\bf 1}}
\newcommand{\Bo}{\Box^{n}_{i}}
\newcommand{\Di}{{\cal D}}
\newcommand{\gd}{{\underline \gamma}}
\newcommand{\gu}{{\underline g }}
\newcommand{\ce}{\mbox{III}}
\newcommand{\be}{\mbox{II}}
\newcommand{\F}{\cal{F_{\eta}}}
\newcommand{\Ci}{\bf{C}}
\newcommand{\ai}{\mbox{I}}
\newcommand{\dupap}{\partial^{+}}
\newcommand{\dm}{\partial^{-}}
\begin{document}

%%%%% To ease editing, for IMPAN journals add:

\baselineskip=17pt

%%%%%%%%%%%%%%%%

\title{Fixed points of the Ruelle-Thurston operator and the Cauchy transform
%\thanks{Results of this note were reported partially in \cite{lreport}
}

\author{Genadi Levin
%\thanks{Supported in part by ISF grant 1226/17}
}
\address{Institute of Mathematics\\ Hebrew University of Jerusalem\\
Givat Ram\\
Jerusalem, Israel}
\email{levin@math.huji.ac.il}

\date{}

\begin{abstract}
We give necessary and sufficient conditions for a function in a naturally appearing functional space to be a fixed point of the Ruelle-Thurston operator associated to a rational function, see Lemma \ref{Tcauchy}. The proof uses essentially a recent \cite{lll}.
As an immediate consequence, we revisit Theorem 1 and Lemma 5.2 of \cite{Le}, see Theorem \ref{tworatintro} and Lemma \ref{contr} below.
\end{abstract}

\subjclass[2020]{Primary 37F10; Secondary 41A20}

\keywords{Ruelle-Thurston pushforward operator, transversality, Herman rings, Cauchy transform, rational approximation}

\maketitle

\section{Introduction}\label{intro}
Let $f$ be a rational function of degree at least $2$. The Ruelle-Thurston (pushforward) operator $T_f$
associated to $f$ acts on a function $g$ as follows:
$$T_fg(x)=\sum_{w: f(w)=x}\frac{g(w)}{f'(w)^2}$$
whenever the right-hand side is well-defined.
(See \cite{Ruelle}, \cite{dht}, \cite{LSY} and references therein for some background.)
We say that $H:\C\to\C$ which is defined Lebesgue almost everywhere is a fixed point of $T_f$ if
$T_fH(z)=H(z)$ for almost every $z\in\C$.

%The operator $T_f$ appears in several settings.
In \cite{lpade}, see also \cite{LSY}, we calculate the action of the resolvent $(1-\rho T_f)^{-1}$ on the Cauchy kernel to study Pad\'e approximations
to the function $\int d\mu(u)/(z-u)$ in the basin of $\infty$ of a polynomial $f$ where $\mu$ is the equilibrium measure of $f$.
%of $B'/B$ for the Botcher coordinate $B$ of a polynomial $f$ at $\infty$.
%to the Cauchy transform of the equilibrium measure of a polynomial $f$.
On the other hand, $T_f$ appears \cite{dht}
%in Douady-Hubbard's proof of Thurston's topological realisation of rational maps \cite{dht}
in a duality relation $\int_{\C}H(z)(f^\ast\nu)(z)d\sigma_z=\int_{\C}(T_f H)(z)\nu(z)d\sigma_z$
where $\sigma_z$ (here and later on) is the Lebesgue measure on the $z$-plane and $f^\ast:\nu\mapsto \nu\circ f (\bar{f'}/f')$ is the pullback operator that acts on Beltrami coefficients $\nu$. Absence of
non-trivial fixed points of $f^\ast$ supported on $J(f)$ (unless $f$ is a so-called flexible Lattes map) is equivalent to the 'no invariant line fields' conjecture
which, in turn, would imply the fundamental 'density of hyperbolicity', or Fatou conjecture, see e.g. \cite{mcm}.
%The operator $T_f$ turns out to be also closely connected to the above conjectures and related problems.
%On the other hand, absence of non-trivial fixed points of $T_f$ in a 'dual' space of functions (or quadratic differentials) is usually easier to deal with.
The operator $T_f$ has been used by Douady and Hubbard in their proof of Thurston's topological realisation of rational maps \cite{dht}, and then applied to transversality/'no invariant line field'/other problems e.g., \cite{Astorg}, \cite{be}, \cite{e}, \cite{Ep2}, \cite{Le1}, \cite{leij1}, \cite{mult}, \cite{Le}, \cite{LSS}, \cite{mak}, \cite{ts} (see also \cite{vstr}). See an example of application after Theorem \ref{tworatintro} of this paper.
For an alternative (local) approach and discussions, see \cite{lss0}.

A key point in those applications is the fact that $1$ is not an eigenvalue of the operator $T_f$, i.e.,
$T_f$ has no non-trivial fixed points in a relevant space (cf. \cite{lss0}). The
case of meromorphic on the Riemann sphere integrable functions (quadratic differentials) is covered in \cite{dht},
more general forms of fixed points are considered in \cite{e}, \cite{Le1}, \cite{Le}, \cite{mak}, \cite{ts}.
Our main result, Lemma \ref{Tcauchy}, describes the set of the fixed points of the operator $T_f$ as well as give conditions for the triviality of this set
in a rather general and natural space of functions. It includes, for example,
Cauchy transforms of finite discrete measures supported on critical orbits of $f$. Fixed points of $T_f$ of this form, for specific classes of maps $f$,
appear in the above mentioned works.
%, with a finite set support e.g., \cite{dht} as well as infinite one e.g., \cite{ts}, \cite{Le1}.
In the present paper we consider the general situation allowing the postcritical set to intersect boundaries of Herman rings of $f$,
%the case that has been a somewhat elusive ????
the remaining
%somewhat elusive
case that has not been been covered
before, cf. \cite{mak}, \cite{Le}, \cite{Astorg}. This case needs separate special considerations.

Lemma \ref{Tcauchy}, more precisely, its corollary Lemma \ref{contr}, allow us to revisit Theorem 1 of \cite{Le}, see Theorem \ref{tworatintro} below, to cover also maps with Herman rings.
Theorem 1, \cite{Le} and its revision Theorem \ref{tworatintro} of the present note include or imply many of the previous results in this direction, e.g., \cite{ts}, \cite{Le1}, \cite{mak}, \cite{vstr}, \cite{be}, and have found new applications in \cite{gash}, \cite{mag}, \cite{Astorgetc}.

%{\bf Acknowledgments}. We would like to thank Magnus Aspenberg for asking about the proof of Lemma 5.2 in \cite{Le},
%Matthieu Astorg for discussions, and the referee for careful reading of the paper and helpful comments.
\section{Statements and comments}
Given a finite complex measure $\mu$ on $\C$, consider the Cauchy transform of $\mu$:
\begin{equation}\label{cauchydef}
\hat\mu(z)=\int\frac{d\mu(w)}{w-z}.
\end{equation}
%By (\ref{muatinfty}),
The integral converges absolutely Lebesgue almost everywhere and $\mu$ is holomorphic in $\bar\C\setminus supp(\mu)$ (see Sect \ref{cauchy} for more details).
We will assume additionally that $\mu$
satisfies the following condition at $\infty$:
\begin{equation}\label{muatinfty}
\int_{|z|>10}|z|\log|z|d|\mu|(z)<\infty.
\end{equation}
In particular, (\ref{muatinfty}) holds if $\mu$ has a compact support on $\C$. Denote
$$A=\int d\mu(z)=\mu(\C), \ \ \ \ \ \ \ \ B=\int zd\mu(z)$$
existing by (\ref{muatinfty}). Note that $\hat\mu$ is integrable at $\infty$ if and only if $A=B=0$.
%, see Sect \ref{mainlemma}, Step I.
We use the following terminology and notations.
\begin{defi}\label{rset} (cf. \cite{Le})
A compact $E\subset \C$ is a A-compact if $A(E)=R(E)$ where $A(E)$ is the algebra of all continuous function on $E$ which are analytic in the interior of $E$
and $R(E)$ is the algebra of uniform limits on $E$ of rational functions with poles outside $E$ ($=$uniform limits on $E$ of functions holomorphic on $E$).
$E\subset\bar\C$ is a A-compact if $M(E)$ is a
A-compact for some (hence, any) Mobius transformation $M$ such that $M(E)\subset\C$. If an A-compact $E$ is nowhere dense, it is called a C-compact (C=continuous since in this case $A(E)=C(E)$, the set of all continuous functions on $E$).
\end{defi}
Necessary and sufficient conditions for a compact in the plane to be A- or C-compact are given by Vitushkin \cite{Vit}.
In particular \cite{gamelin}, a compact $E$ is a C-compact if the area of $E$ is zero. $E$ is C-(respectively, A-)compact if every point of $E$ (respectively, every point of $\partial E$) belongs
to the boundary of a component of the complement $\C\setminus E$, in particular, if the complement to $E$ consists of a finitely many components or if $E$ is (a closed subset of) the boundary of an open set
(then $E$ is a C-compact).

Recall that the Herman ring $A$ is a periodic component of the Fatou set $F(f)$ of $f$ which is homeomorphic to an annulus. The boundary $\partial A$ of $A$ consists of two connected components.
\begin{defi}\label{defring} Given a closed subset $K$ of the Julia set $J(f)$ of $f$, denote by $\mathcal{H}(K)$ a collection of all Herman rings $A$ of $f$ such that $\partial A\subset K$ and let
$$\mathcal{H}_K=\cup\{\overline A: A\in\mathcal{H}(K)\}.$$
%Similarly, given a sequence ${\bf b}=\{b_k\}_{k=0}^\infty\subset J(f)$ with the closure $K=\overline{\{b_k\}}$, let $\mathcal{H}_{discr}({\bf b})$ be a subcollection of $\mathcal{H}(K)$ as follows: $A\in\mathcal{H}_{discr}({\bf b})$ if and only if there exists a subsequence
%$\{b_{k_n}\}$ which is dense in $\partial A$ (i.e., $\{b_{k_n}\}\subset\partial A$ and $\overline{\{b_{k_n}\}}=\partial A$).
%Denote
%$$\mathcal{H}_{discr,{\bf b}}:=\cup\{\overline A: A\in\mathcal{H}_{discr}({\bf b})\}.$$
\end{defi}
Main result is the following. Remark that conditions on $f$ at $\infty$ as well as (1.1)-(1.2) in the next lemma are served for the proof of Theorem \ref{tworatintro}, as in \cite{Le}.
\begin{lem}\label{Tcauchy}
Let $f$ be a rational function which is not a flexible Lattes map.
%normalized so that $f(z)=\sigma z + b + O(1/z)$ for some $\sigma\neq 0,\infty$, and $f$ is not a flexible Lattes map.

{\bf 1}. Suppose $f$ is normalized so that $f(z)=\sigma z + b + O(1/z)$ for some $\sigma\neq 0,\infty$.
Let $\mu$ be a measure that satisfies (\ref{muatinfty}) such that the function $H(z):=\hat\mu(z)$
is a fixed point of the operator $T_f$.
Assume
%that $$K:=supp(\mu)\subset J(f),$$
that either (1.1) or (1.2) holds:

(1.1) $A=B=0$,

(1.2) either $|\sigma|>1$, or $\sigma^q=1$ for some $q\in\N$ and $b=0$,
or $\sigma=1$ and $A=0$.

Assume that $K:=supp(\mu)\subset J(f)$ and, moreover,

(CL) $K$ is a C-compact.

Then $\mu=0$ outside $\mathcal{H}_K$, i.e., $K\subset\partial\mathcal{H}_K=\cup_{A\in\mathcal{H}(K)}\partial A$.
In particular, $\mu=0$ if $\mathcal{H}_K=\emptyset$.
If $\mu\neq 0$ and, additionally to (CL),

(AL) $\mathcal{H}_K$ is a A-compact,
%Assume that
%\begin{enumerate}
%\item[(CL)] $K$ is a C-compact
%\item[(AL)] $\mathcal{H}_K$ is a A-compact.
%\end{enumerate}

then the following representation holds:
%if $\mu\neq 0$, and the function $H(z):=\hat\mu(z)$ is a fixed point of $T_f$,
%the following must hold:
%$\mathcal{H}(K)$ is a non-empty collection of Herman rings of $f$
%, $supp(\mu)=\cup_{A\in\mathcal{H}(K)}\partial A$
%and
\begin{equation}\label{repr}
\mu=\sum_{A\in\mathcal{H}(K)}\mu_A
\end{equation}
where $\mu_A$ is a measure supported on $\partial A$, $\mu_A$ is absolutely continuous w.r.t. harmonic measure of $A$, at least one of $\mu_A$ is not trivial, and $\mu_A$, $\mu_{A'}$ are mutually singular for $A\neq A'$.
In particular, $\mu$ is non-atomic.
Moreover, if $\mu_A\neq 0$, then $A\in\mathcal{H}(K)$ must satisfy the following property:
if $\psi_A:\Delta_A\to A$ is a holomorphic homeomorphism from a round annulus $\Delta_A$ onto $A$, then $1/\psi_A'$ is in the $H^1$-Hardy space, i.e.,
\begin{equation}\label{hardy}
\limsup_{\epsilon\to 0}\int_{\{z\in\Delta_A: \dist(z,\partial\Delta_A)=\epsilon\}}\frac{|dw|}{|\psi_A'(w)|}<\infty
\end{equation}
if $\mathcal{H}_K\subset\C$ is bounded, and otherwise (\ref{hardy}) holds for $M(A)$ instead of $A$
where $M$ is a Mobius transformation such that $M(\mathcal{H}_K)$ is a bounded subset of $\C$.
%Furthermore, for each $A\in\mathcal{H}$, the Radon-Nikodym derivative $h_A\in L^1(\partial A,\omega_A)$ of $\mu_A$ w.r.t. $\omega_A$,
%is characterized as follows:
%let $\varphi_A: A\to\Delta_A$ is a holomorphic homeomorphism from $A$ onto a round annulus $\Delta_A$
%and $\psi_A=\varphi^{-1}_A:\Delta_A\to A$. Then, for a.e. $w\in\partial\Delta_A$ (w.r.t. the arc measure
%on the boundary circles of $\Delta_A$), $1/\psi_A'\in H^1(\Delta_A)$, i.e.,
%$$\limsup_{\epsilon\to 0}\int_{\{z\in\Delta_A: \dist(z,\partial A)=\epsilon\}}\frac{|dw|}{|\psi_A'(w)|}<\infty$$
%$$h_A(z)=\frac{\varphi_A'(z)^2}{\varphi_A(z)^2}$$
%$$h_A(\psi_A(w))=\frac{1}{w^2\psi_A'(w)^2}.$$
%where, for $\omega_A$-a.e. $z$,
%$$\frac{\varphi_A'(z)}{\varphi_A(z)}=\lim_{z'\to z, z'\in R_z} \frac{\varphi_A'(z')}{\varphi_A(z')}$$
%and the limit exists
%for $\omega_A$-a.e. $z\in\partial A$
%along the ray
%$R_z=\psi_A(\{w'\in A, w'=r w\})$ with $z=\psi_A(w)$, $w\in\partial\Delta_A$.
%(A) $K$ is C-compact and $K$ contains no the boundary of a Herman ring of $f$.
%(B) $f$ has a (maximal non-empty) collection $\mathcal{H}=\{A\}$ of Herman rings such that
%$$\cup_{A\in \mathcal{H}}\partial A\subset K$$
%$K$ is C-compact and, moreover,
%and $K\cup_{A\in \mathcal{H}}\overline{A}$ is A-compact.
%Let $K_{\mathcal{H}}=\cup_{A\in \mathcal{H}}\overline{A}$. Then $K_{\mathcal{H}}$ is H-compact
%and $\tilde K$ is R-compact, where $\tilde K$ is
%the union of those components of $K$ that intersect $\partial{K_{\mathcal{H}}}$. Besides, $K$ (in fact, enough $K\setminus\tilde K$) is C-compact.

{\bf 2}. Vice versa, let $\mathcal{H}=\{A,f(A),\cdots,f^{q-1}(A)\}$ be the cycle of a Herman ring $A$. Assume that
$1/\psi_A'\in H^1$. Then $1/\psi_B'\in H^1$ for every $B\in\mathcal{H}$ and there exists a finite complex measure $\mu\neq 0$ which is supported on $\cup_{B\in\mathcal{H}}\partial B$ such that:
\begin{enumerate}
%, for $H:=\hat\nu$: (2.1) $H=\hat\mu$ is a fixed point of $T_f$
\item [(2.1)] $T_f \hat\mu=\hat\mu$ in $\C\setminus supp(\mu)$ and $\hat\mu=0$ in $\bar\C\setminus\cup_{B\in\mathcal{H}}\overline B$,
\item [(2.2)] the representation (\ref{repr}) holds (with $\mathcal{H}$ instead of $\mathcal{H}(K)$).
\end{enumerate}
The measure $\mu$ is unique in the following sense: if $\nu$ is another measure with the same support $supp(\nu)=\cup_{B\in\mathcal{H}}\partial B$
for which (2.1) holds with $\mu$ replaced by $\nu$, then $\nu=k \mu$ for some constant $k\in\C$.
%and, for each $A\in\mathcal{H}$,
%$H|_{A}=C_A(\varphi'/\varphi)^2$ for some $C_A\in\C$
\end{lem}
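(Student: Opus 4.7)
The plan is to combine the machinery of \cite{lll} with classical facts about Cauchy transforms and the boundary behavior of holomorphic functions on Fatou components, handling Parts~1 and~2 in sequence.

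For Part~1, the starting observation is that $\hat\mu$ is holomorphic on $\bar\C\setminus K$, so it extends holomorphically into every Fatou component of $f$. The growth/normalization conditions (1.1) or (1.2) on $H=\hat\mu$ at $\infty$ are designed precisely so that $H$ lies in the functional class on which the main result of \cite{lll} applies, and so that this result can be used to conclude that on every Fatou component which is \emph{not} a Herman ring with boundary contained in $K$, the fixed-point function $\hat\mu$ vanishes identically. (This is where the hypothesis that $f$ is not a flexible Latt\`es map enters.) Once $\hat\mu\equiv 0$ on all such Fatou components, the C-compact hypothesis on $K$ propagates this vanishing across the Julia part of $\bar\C\setminus\mathcal{H}_K$ by uniform approximation of continuous functions by holomorphic ones on $K$, forcing $\hat\mu\equiv 0$ on the open set $\bar\C\setminus\mathcal{H}_K$. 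Since $\bar\partial\hat\mu$ computes $\mu$ up to a constant, this yields $\mu=0$ outside $\mathcal{H}_K$, i.e., $K\subset\partial\mathcal{H}_K$.

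Under the additional A-compact hypothesis, the representation (\ref{repr}) follows by decomposing $\mu$ along the connected components of $\mathcal{H}_K$ and setting $\mu_A:=\mu|_{\partial A}$. Absolute continuity of $\mu_A$ with respect to harmonic measure on $\partial A$ is obtained via an F.~and M.~Riesz-type argument applied separately to $\hat\mu$ in each of the two complementary Jordan disks of the ring $A$; this automatically delivers non-atomicity and mutual singularity of $\mu_A,\mu_{A'}$ for $A\neq A'$. The Hardy condition (\ref{hardy}) is extracted by transporting the problem to the round model $\Delta_A$ via $\psi_A$: the finiteness of the total variation $|\mu_A|$, when rewritten as a boundary integral on $\Delta_A$, becomes exactly the statement that $1/\psi_A'$ belongs to $H^1$, as the Jacobian of $\psi_A$ on the boundary contributes the factor $|\psi_A'|^{-1}$. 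The M\"obius reduction in the unbounded case is standard.

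For Part~2, I construct $\mu$ explicitly. Starting from the assumed $H^1$-function $1/\psi_A'$, I propagate membership in $H^1$ along the cycle $B=f^j(A)$ using the conjugacy $\psi_B^{-1}\circ f\circ\psi_A$, which on the round-annulus side is a rigid rotation; the chain rule transfers the $H^1$-condition across the cycle. On each $\partial B$, I define $\mu_B$ as the boundary-value measure whose density, in the $\psi_B$-coordinate, is a unimodular multiple of $1/\psi_B'$; set $\mu=\sum_B\mu_B$. Property~(2.1) then follows by a direct transfer-operator computation: the weight $1/f'(w)^2$ appearing in $T_f$ corresponds, on the annular model, to squaring the derivative of the rotation, which is trivial, so invariance reduces to the invariance of the arclength measure under rotation. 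The vanishing of $\hat\mu$ outside $\bigcup\overline{B}$ is the cancellation built into the fact that each $1/\psi_B'$ is the non-tangential boundary value of an $H^1$-function inside $B$; uniqueness up to a scalar comes from applying Part~1 to $\nu-k\mu$ with $k$ chosen so that one of the $(\nu-k\mu)_B$ vanishes, reducing the support and contradicting non-triviality unless $\nu=k\mu$. The step I expect to be the main obstacle is the invocation of \cite{lll} in the new case where the post-critical set touches $\partial A$, since one must verify that the functional-analytic setup of that reference still applies uniformly across ring boundaries, and that the delicate boundary behavior of $\hat\mu$ near $\partial A$ does not corrupt the vanishing argument on the neighboring non-Herman-ring components.
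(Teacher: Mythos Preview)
Your proposal has a genuine gap in Part~1: you misattribute the vanishing of $\hat\mu$ on non-Herman Fatou components to ``the main result of \cite{lll}.'' The statements quoted from \cite{lll} (Propositions~\ref{remov}--\ref{rings} in the paper) are approximation-theoretic facts about Cauchy transforms vanishing outside compacts; they say nothing about dynamics on Fatou components and make no use of the hypothesis $T_fH=H$. The actual mechanism is dynamical and is the bulk of the paper's argument (Steps~I--VI). From $T_fH=H$ together with the growth conditions~(1.1)/(1.2) one first shows that equality holds in the triangle inequality, i.e.\ $|H(x)|=\sum_{f(w)=x}|H(w)|/|f'(w)|^2$ a.e. This yields simultaneously an $f$-invariant absolutely continuous measure $\Lambda(E)=\int_E|H|\,d\sigma$ and an invariant line field $\overline{H}/|H|\cdot\overline{dz}/dz$. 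The line field disposes of the case $J(f)=\bar\C$ (this is where ``not flexible Latt\`es'' enters, not where you placed it). Invariance of $\Lambda$ then forces $H\equiv 0$ on non-periodic Fatou components (pairwise disjoint preimages would make $\Lambda$ infinite on a set where it must be finite; the parabolic-at-$\infty$ subcase requires a separate petal estimate, Lemma~\ref{paraint}). A case analysis of attracting, parabolic, and Siegel periodic components, again via $\Lambda$-invariance on suitable fundamental domains, kills $H$ there as well. On a Herman ring one obtains the explicit form $H|_\Omega=C_\Omega(\varphi_\Omega'/\varphi_\Omega)^2$ by solving the functional equation on the round model; it is this formula, transported via (\ref{viii}), that produces the $H^1$ condition~(\ref{hardy}), rather than your total-variation argument. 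Only after all of this is \cite{lll} invoked, to pass from ``$\hat\mu=0$ off $K\cup\mathcal{H}_K$'' to the stated conclusions about $\mu$ itself.

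For Part~2, your outline is in the right spirit, though the paper appeals directly to \cite{lll}, Theorem~1,~{\bf P2} for existence rather than building $\mu$ by hand. Your uniqueness argument via Part~1 is more circuitous than needed: the paper observes that the functional equation forces $\hat\nu$ to have the same explicit form $C(\varphi'/\varphi)^2$ on each ring, chooses $C$ so that $\widehat{\nu-C\mu}=0$ off $\cup_B\partial B$, and then kills $\nu-C\mu$ directly by the C-compact property of that union.
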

Notice that if the boundary curves of a Herman ring $A$ happen to be sooth enough (say $C^2$) then $1/\psi_A'\in H^1$.
%After part {\bf 2}, we can say that a {\it cycle of Herman rings is in} $H^1$ if $1/\psi_A'\in H^1$ for some, hence all, rings $A$ of the cycle.
%\begin{coro}\label{ker}
%Let $f$ be a rational function such that the closure of the union of all its Herman rings is a A-compact.
%Let $X$ be a collection of all functions of the form $\hat\nu$, for some finite complex measures $\nu$ such that $supp(\nu)\subset J(f)$ and $supp(\nu)$ is a C-compact. Then $X$ is a linear space and dimension of the kernel of the operator $T_f$ restricted to the space $X$ is equal to
%the total numbers $\kappa\ge 0$ of cycles $\mathcal{H}_1,\cdots,\mathcal{H}_\kappa$ of Herman rings which are in $H^1$.
%Then Lemma \ref{Tcauchy} gives necessary and sufficient conditions for a function of this space to be a fixed point of $T_f$.
%\end{coro}
%The proof of Lemma \ref{Tcauchy} and Corollary \ref{ker} occupies Sect \ref{mainlemma}.
\begin{com}\label{always}
Presumably, closures of Herman rings are mutually disjoint and the complement to the closure of any Herman ring consists of a finitely many components. That would imply that
%the union of closures of Herman rings of a rational function is always a A-compact, in particular,
the condition (AL) always holds.
Note that (AL) holds if, for example, boundaries of Herman rings of $f$ are locally connected.
%(e.g., Jordan curves) because in this case the complement to $\mathcal{H}_K$ consists of a finitely many components.
\end{com}
\begin{com}
(cf. \cite{Astorg})
Condition (CL) can be replaced by the following one:
\begin{enumerate}
\item[($\widetilde{CL}$)] $f$ carries no an invariant line field on $K$.
%and $\mathcal{C}_\mathcal{H}$ is a A-compact.
\end{enumerate}
This follows at once from Step III of the proof of Lemma~\ref{Tcauchy}, see Sect \ref{mainlemma}.
The condition ($\widetilde{CL}$) in the case when $\mathcal{H}_K$ is empty, i.e., $K$ contains no boundaries of Herman rings was, in fact, observed in \cite{Astorg}.
\end{com}
The proof of Lemma \ref{Tcauchy} goes along the following lines, see Section \ref{mainlemma}. First, using the contraction property of the operator $T_f$,
it is shown that $H=0$ outside $K\cup\mathcal{H}_K$.
%it is noted that $H$ is the Cauchy transform of a discrete measure and
If $K$ contains no boundaries of Herman rings and $K$ is a C-compact, it follows that then $\mu=0$. In fact, in this case the proof is not original and is more or less
minor variation of arguments scattered in~\cite{dht},~\cite{e},~\cite{Le1},
~\cite{mak},~\cite{mult},~\cite{Le}.
%Case (B) is missed in \cite{Le}.
The case that $K$ does contain boundaries of Herman rings is the main content of the present note. In this case, we use some recent results about the Cauchy transform from \cite{lll}: the claim involving (CL) will follow from Lemma 2.1 and involving also (AL) - from Corollary 2.1 of \cite{lll}, we state them in Sect \ref{cauchy} for the reader's convenience.
%and Propositions \ref{remov}-\ref{rings} of the present note, to conclude that $\mu=0$ unless $H$
%is the Cauchy transform of an explicitly described measures as in the conclusion of part {\bf 1} of Lemma \ref{Tcauchy}.

%Given a sequence ${\bf b}=\{b_k\}_{k=0}^\infty\subset\C$ with the closure %$K=\overline{\{b_k\}}\subset\bar\C$, let us
%introduce a subcollection $\mathcal{H}_s({\bf b})$ of $\mathcal{H}(K)$ as follows: $A\in\mathcal{H}_s({\bf b})$ if and only if there exists a subsequence
%$\{b_{k_n}\}$ which is dense in $\partial A$ (i.e., $\{b_{k_n}\}\subset\partial A$ and $\overline{\{b_{k_n}\}}=\partial A$).
%Denote
%$$\mathcal{H}_{s,{\bf b}}:=\cup\{\overline A: A\in\mathcal{H}_s({\bf b})\}.$$
Let us draw a corollary which is suitable for the main application, Theorem \ref{tworatintro}.
Suppose that $V:=\{v_1,\cdots,v_\ell\}\subset J(f)$ is a collection of critical values of $f$.
Let
$$K=\overline{\cup_{j=1}^\ell O^+(v_j)}$$
where $O^+(x)=\{f^i(x)\}_{i\ge 0}$ denotes the forward orbit of a point $x$. Note that $K\subset J(f)$.
\begin{defi}\label{defringcr}
Let $\mathcal{H}_{crit}(V)$ be a subcollection of $\mathcal{H}(K)$
%as follows: $\mathcal{H}_{crit}({\bf v})$
of those Herman rings $A$
%whose (periodic) orbits of those Herman rings $A$
    %=\cup\{\overline A: A\in\mathcal{H}_s({\bf c})\}$ where $A\in\mathcal{H}_s({\bf c})$ if and only if
such that there is a pair of different indices $1\le i<i'\le \ell$ with the property that
$O^+(v_i)$ is a dense subset of $L:=\cup_{k=0}^{q-1}f^k(L_A)$ and
$O^+(v_{i'})$ is a dense subset of $L':=\cup_{k=0}^{q-1}f^k(L_A')$ where $L_A,L'_A$ are two components of $\partial A$ and $q$ is the period of $A$.
Denote
$$\mathcal{H}_{crit,V}:=\cup\{\overline A: A\in\mathcal{H}_{crit}(V)\}.$$
\end{defi}
Now, suppose that, as in Lemma \ref{Tcauchy}, $f$ is not a flexible Lattes map with the normalization $f(z)=\sigma z+b+O(1/z)$ at $\infty$.
%For each $j\in\{1,\cdots,\ell\}$, let $m_j$ be a (discrete, finite, complex) measure supported in $O^+(v_j)\cap\C$.
%such that $m_j(\{v_j\})\neq 0$ whenever $v_j$ is not periodic.
%Let us apply Lemma \ref{Tcauchy} to a measure $m$ which is a linear combination of $m_1,\cdots,m_\ell$.
%$$m=\sum_{k\ge 0}\alpha_k \delta_{b_k}, \ \ \ \ \ \ \ \alpha_k, b_k\in\C,$$
%where $\sum_{k\ge 0} |\alpha_k|(1+|b_k|^2)<\infty$. In particular,
%$\sum_{\{|b_k|>10\}}|\alpha_k||b_k|\log|b_k|<\infty$, i.e., the condition (\ref{muatinfty}) holds for $m$,
%and $A=\sum_{k\ge 0} \alpha_k$, $B=\sum_{k\ge 0} \alpha_k b_k$ exist.
%Let ${\bf b}=\{b_k\}$ and $K=\overline{\{b_k\}}=supp(m)$.
%In this case, introduce a subcollection $\mathcal{H}(K)_s$ of $\mathcal{H}(K)$ as follows: $A\in\mathcal{H}(K)_s$ if and only if there exists a subsequence
%$\{b_{k_n}\}$  which is dense in the boundary of $A$ (in particular $\{b_{k_n}\}\subset\partial A$).
%Denote
%$$\mathcal{H}_{K,s}:=\cup\{\overline A: A\in\mathcal{H}(K)_s\}.$$
%(Note that we don't assume that all $b_k$'s are pairwise different.)
\begin{lem}\label{contr} (cf. \cite{Le}, Lemma 5.2)
For each $j\in\{1,\cdots,\ell\}$, let $m_j$ be a discrete, finite, complex measure supported in $O^+(v_j)\cap\C$, and $m$ is a linear combination of $m_1,\cdots,m_\ell$.
Let
\begin{equation}\label{H}
H(x):=\hat m(x)=\sum_{k=0}^\infty \frac{\alpha_k}{b_k-x}. \ \mbox{ Assume } \ \sum_{k\ge 0} |\alpha_k|(1+|b_k|^2)<\infty.
\end{equation}
In particular, $A=\sum_{k\ge 0} \alpha_k$, $B=\sum_{k\ge 0} \alpha_k b_k$ exist.
%Let $K=supp(m)$, i.e., $K$ is the closure (on the Riemann sphere)
%of the set $\{b_k\}$.
Assume that $m_i(\{v_i\})\neq 0$ whenever $v_i$ is neither periodic nor in the forward orbit of any other $v_{i'}$, $i'\neq i$.
Assume, additionally, that the following conditions of the part {\bf 1} of Lemma \ref{Tcauchy} are satisfied: either (1.1) or (1.2) as well as (CL). Finally, assume

(AL$_{cr}$) $\mathcal{H}_{crit,V}$ is a A-compact.

Then $T_f H=H$ implies $m=0$ (i.e., $H=0$).
\end{lem}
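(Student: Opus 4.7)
The plan is to argue by contradiction: assume $H=\hat m\neq 0$ with $T_fH=H$, apply Lemma \ref{Tcauchy} part {\bf 1} to $\mu=m$, and derive a contradiction from the incompatibility of the resulting structure with the discreteness of $m$. First I verify the hypotheses of Lemma \ref{Tcauchy} part {\bf 1}: condition (\ref{muatinfty}) is immediate from the moment bound $\sum|\alpha_k|(1+|b_k|^2)<\infty$ since $|z|\log|z|\lesssim 1+|z|^2$ for $|z|>10$. Writing $K_m:=supp(m)$, we have $K_m\subseteq K$; since $K$ is a C-compact it has zero area, so $K_m$ has zero area and is itself a C-compact, giving (CL) for $K_m$. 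Hypothesis (1.1) or (1.2) is directly assumed. The first conclusion of Lemma \ref{Tcauchy} part {\bf 1} (which does not use (AL)) then gives $supp(m)\subseteq\partial\mathcal{H}_{K_m}=\bigcup_{A\in\mathcal{H}(K_m)}\partial A$. If $\mathcal{H}(K_m)=\emptyset$ we are done, so assume otherwise.

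The main combinatorial step is to show $\mathcal{H}(K_m)\subseteq\mathcal{H}_{crit}(V)$. Fix $A\in\mathcal{H}(K_m)$ of period $q$. The two cyclic boundary sets $L=\bigcup_{k=0}^{q-1}f^k(L_A)$ and $L'=\bigcup_{k=0}^{q-1}f^k(L_A')$ are disjoint and both contained in $K_m=supp(m)$, so atoms of $m$ are dense in each. These atoms are forward iterates of $v_j\in V$; by forward invariance of $L$ and the irrational rotation of $f^q$ on the boundary curves, at least one critical orbit $O^+(v_j)$ must enter $L$. The hypothesis $m_i(\{v_i\})\neq 0$ for each free $v_i$ forces every free $v_i$ to be an atom of $m$, hence a point of $supp(m)\subseteq\partial\mathcal{H}_{K_m}$; tracing $O^+(v_j)$ back to the first free critical value in the chain then pins down a free $v_i\in L$. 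The same argument applied to $L'$ gives a free $v_{i'}\in L'$ with $i'\neq i$ (since $L\cap L'=\emptyset$). By Definition \ref{defringcr}, $A\in\mathcal{H}_{crit}(V)$.

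Consequently $\mathcal{H}_{K_m}\subseteq\mathcal{H}_{crit,V}$. To upgrade (AL$_{cr}$) to (AL) for $K_m$, I note that $\mathcal{H}_{K_m}$ is a finite sub-union of the closed Herman rings composing $\mathcal{H}_{crit,V}$ (there being only finitely many cycles of Herman rings, cf.\ Comment \ref{always}), and A-compactness descends to this sub-union via the sufficient topological condition in Definition \ref{rset}: the complement $\C\setminus\mathcal{H}_{crit,V}$ has finitely many components, and so does $\C\setminus\mathcal{H}_{K_m}$. Now the full conclusion of Lemma \ref{Tcauchy} part {\bf 1} yields the representation (\ref{repr}) with each $m_A$ absolutely continuous with respect to harmonic measure on $\partial A$, hence non-atomic. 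Since $m$ is a discrete atomic measure, every $m_A$ must vanish, giving $m=0$ and the desired contradiction.

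The main obstacle I foresee is the back-tracking in the second paragraph. An atom $b_k=f^{n_k}(v_{j(k)})$ on $\partial A$ need not come from $v_{j(k)}$ itself lying on $\partial A$, and one must argue that behind any such family of atoms there sits a free critical value actually on $\partial A$. The hypothesis $m_i(\{v_i\})\neq 0$ for free $v_i$ is precisely what prevents such a free critical value from being obscured by cancellations in the linear combination $m=\sum c_jm_j$: freeness of $v_i$ means $v_i\notin O^+(v_{i'})$ for $i'\neq i$, so no other $m_{i'}$ can contribute an atom at $v_i$ to cancel that of $m_i$.
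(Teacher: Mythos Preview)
Your overall strategy coincides with the paper's—assume $m\neq 0$, apply Lemma~\ref{Tcauchy} part {\bf 1}, and contradict discreteness via the non-atomicity conclusion—but the execution has real gaps.

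A minor one first: C-compactness does not imply zero area (the area condition after Definition~\ref{rset} is sufficient, not necessary). The correct route to ``$K_m$ is a C-compact'' is Proposition~\ref{remov}(a): closed subsets of C-compacts are C-compacts.

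The substantive gap is your passage from (AL$_{cr}$) to (AL). You assert that $\C\setminus\mathcal{H}_{crit,V}$ has finitely many components, but this is nowhere assumed: (AL$_{cr}$) says only that $\mathcal{H}_{crit,V}$ is an A-compact, and A-compactness neither forces finitely many complementary components nor passes to closed subsets or finite sub-unions. So the inclusion $\mathcal{H}(K_m)\subseteq\mathcal{H}_{crit}(V)$ is not enough. The paper instead proves the \emph{equality} $\mathcal{H}_{crit,V}=\mathcal{H}_K$, which lets (AL$_{cr}$) serve directly as (AL). To get equality it first shows (via the trace-back to a free $v_i$, as in your final paragraph) that \emph{every} $v_j$ already lies on some Herman-ring boundary, whence $K$ itself is contained in the union of such boundaries and $\cup_j O^+(v_j)$ is dense in $\partial\mathcal{H}_K$.

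Your combinatorial step also needs work. You claim $L,L'\subset K_m$, but $A\in\mathcal{H}(K_m)$ gives only $L_A,L_A'\subset K_m$, not the full cyclic unions. More importantly, Definition~\ref{defringcr} requires $O^+(v_i)$ to be \emph{dense} in $L$, not merely to enter it; and your assertion ``a free $v_i\in L$'' is unjustified—you only know $v_i\in supp(m)\subset\partial\mathcal{H}_{K_m}$, which could be a different boundary. The paper fills this via a separate \textbf{Claim}: for $x$ in a boundary component $L_A$, the orbit $O^+(x)$ is either nowhere dense or dense in $L$ (proved using harmonic measure and radial limits of the uniformizer $\psi_A$). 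Combined with the fact that the finitely many $O^+(v_j)$ together are dense in $L_A$, this forces at least one of them to be dense in $L$. Your phrase ``irrational rotation of $f^q$ on the boundary curves'' points at the right mechanism but does not constitute the argument.
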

Indeed, let by a contradiction $m\neq 0$ so that after perhaps throwing away some indices and re-numerating the rest, one can assume that $m=\sum_{1\le j\le \ell} a_j m_j$ where all $a_j\neq 0$.
As, by (CL), $K$ is a C-compact, by Lemma \ref{Tcauchy}, part {\bf 1}, $supp(m)\subset\partial\mathcal{H}_K$, in particular, one can assume from the beginning that
each $O^+(v_j)$ falls into the boundary of some $A\in\mathcal{H}(K)$.
%, moreover, $A\in\mathcal{H}(K)$ if and only if $\partial A=\overline{\partial A\cap\cup_{1\le j\le \ell}O^+(v_j)}$.
%In fact, already any $v_j$ is in the boundary of some Herman ring. Indeed,
If $v_j$ is periodic, then this obviously implies that $v_j$ is in the boundary of some Herman ring.
If $v_j$ is not periodic, then $v_j$ is in the forward orbit of some $v_i$ which is neither periodic nor in the forward orbit of any other $v_{i'}$. Hence, $m(\{v_i\})=a_i m_i(\{v_i\})\neq 0$ which implies that again $v_i$, hence, $v_j$ is in the boundary of some Herman ring. This proves that any $v_j$ is in the boundary of some Herman ring. Therefore, $K=\overline{\cup_{j=1}^\ell O^+(v_j)}$ is a subset of boundaries of Herman rings, hence,
by the definition of $\mathcal{H}(K)$, the union $\cup_{1\le j\le \ell}O^+(v_j)$ is a dense subset of
$\partial\mathcal{H}_K$.
Having that, the following {\bf Claim} shows that $\mathcal{H}_{crit,V}=\mathcal{H}_K$, hence, by (AL$_{cr}$), that $\mathcal{H}_K$ is a A-compact. Therefore, by Lemma \ref{Tcauchy}, part {\bf 1}, $m$ has no atom, a contradiction.

{\bf Claim}.
{\it Let $x$ be in a component $L_A$ of the boundary of a Herman ring $A$. Then $O^+(x)$ is either nowhere dense or (everywhere) dense in $L=\cup_{k=0}^{q-1}f^k(L_A)$, $q$ is the period of $A$}.

Indeed, assume without loss of generality that $q=1$ and that there is a ball $B$ centered at a point of $L$($=L_A$) such $B\cap L=B\cap \omega(x)$. As the harmonic measure of $B\cap L$ is positive, there is a subset $X_B$ of positive Lebesgue length in one of the boundary circle $S$ of the annulus
$\Delta_A$ which uniformizes $A$ such that for each $w\in X_B$ there exists the radial limit $\psi_A(w)$ of $\psi_A:\Delta_A\to A$ and $\psi_A(w)\in B$. Hence, the
set $X=\cup_{i\ge 0}\lambda^i X_B$, for the corresponding $\lambda\in S^1$ where $\psi_A^{-1}\circ f\circ\psi_A: w\mapsto\lambda w$, has the full length in $S$ and, for each $w\in X$, $\psi_A(w)$ exists and is in $\omega(x)$. Therefore, $\omega(x)$ is a closed and dense subset of $L$, i.e., $\omega(x)=L$. Thus either $\omega(x)$ is nowhere dense in $L$ or is equal to $L$.

\

The proof of the following Theorem~\ref{tworatintro} is (literally) identical to the proof of Theorem 1 of \cite{Le} (see Sect 5.3 there), after replacing Lemmas 5.2-5.3 of \cite{Le} by Lemma~\ref{contr}.
Recall that a critical point $c$ of $f$ with the forward orbit in $\C$ is called summable if, for $v=f(c)$,
$\sum_{n=0}^\infty\frac{1+|f^n(v)|^2}{1+|v|^2}\frac{1}{|(f^n)'(v)|}<\infty$.
\begin{theo}\label{tworatintro}(cf. \cite{Le}, Theorem 1)
%, \cite{Astorg})
Let $f$ be an arbitrary rational function of degree $d\ge 2$ which is not a flexible Lattes example.
Suppose that $\{c_1,...,c_r\}$ is a collection
of $r$ summable critical points of $f$,
and the union
$$\mathcal{C}:=\cup_{j=1}^r \omega(c_j)$$
of their $\omega$-limit sets
satisfies the following conditions:
\begin{enumerate}
\item[(C)] $\mathcal{C}$ is a C-compact,
\item[(A$_{cr}$)] $\mathcal{H}_{crit,{\bf v}}$ is a A-compact where ${\bf v}:=\{f(c_1),\cdots,f(c_r)\}$.
\end{enumerate}
Replacing if necessary $f$ by its equivalent
(i.e., Mobius conjugate),
one can assume the forward orbits
of $c_1,...,c_r$ avoid infinity.
Consider the set $X_f$ of all rational functions of degree $d$
which are close enough to $f$ and have the same number $p'$
of different critical points with the same corresponding multiplicities.
Then there is a $p'$-dimensional manifold $\Lambda_f$ and its
$r$-dimensional submanifold $\Lambda$, $f\in \Lambda\subset \Lambda_f
\subset X_f$, with the following properties:

(a) every $g\in X_f$ is equivalent
to some $\hat g\in \Lambda_f$,

(b) for every one-dimensional family $f_t\in \Lambda$ through $f$,
such that $f_t(z)=f(z)+t u(z)+O(|t|^2)$ as $t\to 0$,
if $u\not=0$, then,
for some $1\le j\le r$, the limit
\begin{equation}\label{familyratintro}
\lim_{m\to \infty}\frac{\frac{d}{dt}|_{t=0}f_t^m (c_j(t))}
{(f^{m-1})'(f(c_j))}=\sum_{n=0}^\infty \frac{u(f^n(c_j))}{(f^{n})'(f(c_j))}\neq 0
\end{equation}
exists and is a {\it non-zero} number.
Here $c_j(t)$ is the critical point of $f_t$, such that $c_j(0)=c_j$.
%and $v_j=f(c_j)$.
%Furthermore, if $f$ and all the critical values of $f$ are real, then the conditions (C)-(A) hold and the above maps and spaces can be taken real.
\end{theo}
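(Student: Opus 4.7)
The plan is to run the argument of Theorem~1 of \cite{Le} verbatim, using Lemma \ref{contr} in place of Lemmas~5.2--5.3 there. First I would assemble the deformation spaces: the ambient $p'$-dimensional manifold $\Lambda_f \subset X_f$ is obtained, as in \cite{Le}, by marking three points on the sphere to kill the Mobius ambiguity and applying the implicit function theorem to the critical equations defining $X_f$. The $r$-dimensional submanifold $\Lambda$ is carved out by demanding that only the selected critical values $v_1, \ldots, v_r$ move, while the orbit data of the remaining critical points is frozen; this gives (a). Using a Mobius conjugation I normalize so that the forward orbits of $c_1, \ldots, c_r$ avoid $\infty$ and one of the alternatives (1.1)--(1.2) of Lemma \ref{Tcauchy} holds for $f$ at $\infty$.

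Second, I would establish existence of the limit in (\ref{familyratintro}). Writing $z_n := f^n(c_j)$ and $W_m := \frac{d}{dt}|_{t=0} f_t^m(c_j(t))$, the chain rule combined with the criticality $f'(c_j)=0$ gives the cocycle recursion
\begin{equation*}
W_{m+1} = f'(z_m)\, W_m + u(z_m), \qquad W_1 = u(c_j),
\end{equation*}
and solving yields $W_m/(f^{m-1})'(v_j) = \sum_{n=0}^{m-1} u(z_n)/(f^n)'(v_j)$. Since $u$ is a rational function of bounded degree with the adopted normalization at $\infty$, $|u(z)| = O(1+|z|^2)$ uniformly on $\C$; combined with the summability of $c_j$, the series $S_j := \sum_{n \ge 0} u(z_n)/(f^n)'(v_j)$ converges absolutely, which proves both existence of the limit and the identity in (\ref{familyratintro}).

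The heart of the proof is non-vanishing of $S_j$ for some $j$. I argue by contradiction: suppose $u \not\equiv 0$ but $S_j = 0$ for every $j=1,\ldots,r$. As in \cite{Le}, Sect.~5.3, after an adapted choice of basis for the tangent space to $\Lambda$ at $f$, the vector $u$ is expressed as a linear combination of model tangent vectors attached to the critical values $v_1, \ldots, v_r$; its partial-fraction coefficients together with the identities $S_j = 0$ assemble, via the direct computation carried out in \cite{Le}, into a fixed-point equation $T_f H = H$ for
\begin{equation*}
H(z) = \hat m(z) = \sum_{j=1}^r \sum_{n \ge 0} \frac{\alpha_{j,n}}{f^n(v_j) - z},
\end{equation*}
where $m = \sum_j a_j m_j$ is a finite discrete measure with $m_j$ supported on $O^+(v_j)\cap\C$ and with $m_j(\{v_j\}) \neq 0$ whenever $v_j$ is neither periodic nor in the forward orbit of any other $v_i$. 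Summability of the $c_j$ supplies the weight hypothesis $\sum|\alpha_{j,n}|(1+|f^n(v_j)|^2) < \infty$ of (\ref{H}); the hypotheses (C) and (A$_{cr}$) of the theorem yield (CL) and (AL$_{cr}$) of Lemma \ref{contr} (noting that $K=\overline{\cup_j O^+(v_j)}$ differs from $\mathcal{C}$ by at most a countable set of zero area, so (C) implies (CL)); and the normalization at $\infty$ delivers (1.1) or (1.2). Lemma \ref{contr} therefore forces $m=0$, contradicting $u \not\equiv 0$.

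The main obstacle, relative to \cite{Le}, arises precisely when $\mathcal{C}$ meets boundaries of Herman rings: in that case $m$ could a priori be supported on $\partial\mathcal{H}_K$ and the original argument of \cite{Le} breaks down. This is exactly where Lemma \ref{Tcauchy}, through Lemma \ref{contr}, supplies the new input: the non-atomicity conclusion of Lemma \ref{Tcauchy}.1 rules out the discrete measure $m$ once one knows (from the $m_j(\{v_j\})\neq 0$ clause) that $m$ must charge some $v_j$, and the contradiction then closes. Apart from this ingredient, the proof is a faithful transcription of Sect.~5.3 of \cite{Le}.
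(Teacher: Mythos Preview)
Your proposal is correct and follows exactly the paper's approach: the paper's entire proof of Theorem~\ref{tworatintro} is the single remark that the argument of Theorem~1 in \cite{Le}, Sect.~5.3, goes through verbatim once Lemmas~5.2--5.3 there are replaced by Lemma~\ref{contr}, and that is precisely your plan (your sketch of the deformation spaces, the cocycle recursion for the limit, and the contradiction via $T_fH=H$ is an accurate summary of what \cite{Le} does). The only minor imprecision is your parenthetical justification that (C) implies (CL): the relevant point is not merely that $K\setminus\mathcal{C}$ has zero area, but that the orbit points in $K\setminus\mathcal{C}$ are isolated in $K$, so C-compactness --- being a local property --- passes from $\mathcal{C}$ to $K$.
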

Note the following particular case: all points $c_1,\cdots,c_r$ belong to the same grand orbit i.e., $f^{n_i}(c_i)=f^{n_j}(c_j)$ for any $i,j$ and some $n_i,n_j>0$. Then by Definition \ref{defringcr}, $\mathcal{H}_{crit,{\bf v}}$ is empty, hence, the condition (A$_{cr}$) is void:

{\bf Complement to Theorem \ref{tworatintro}}. {\it If all points $c_1,\cdots,c_r$ are in the same grand orbit (e.g., if $r=1$ or $r>1$ but $f(c_1)=f(c_2)=\cdots=f(c_r)$), then the condition (A$_{cr}$) is unnecessary.}

In Theorem 1 of \cite{Le}, the condition (A$_{cr}$) of the present Theorem \ref{tworatintro} was absent.
%Compared to Theorem 1 of \cite{Le}, in this Theorem \ref{tworatintro} the condition (A)
%is added.
Presumably, (A$_{cr}$) holds always, see Comment \ref{always}.
%Conditions (C) and (A) are independent. (A) is void if and only if $\mathcal{C}$ contains no boundaries of Herman rings.
%, i.e., $\mathcal{H}(\mathcal{C})$ and $\mathcal{H}_\mathcal{C}$ are empty.
%Obviously, if $\hat\mathcal{C}=\mathcal{C}$, the condition (A) turns into: $\mathcal{C}$ is a C-compact.
The case $\mathcal{H}(\mathcal{C})=\emptyset$
%under the condition (C)
was covered in \cite{Le} (see also \cite{Astorg}). Here we treat the missing case, i.e, when %$\mathcal{H}(\mathcal{C})\neq\emptyset$)
boundaries of some Herman rings are contained in $\mathcal{C}$.
%We use \cite{lll}, see Sect \ref{cauchy} below.

Condition (\ref{familyratintro}) is equivalent to the following: for a coordinate system $\{x_1,\cdots,x_{p'}\}$ in $\Lambda_f$, the rank of the matrix $\{L(c_j,x_k)\}_{1\le k\le p', 1\le j\le r}$,
where
$$L(c_j,x_k)=\lim_{m\to \infty}\frac{\frac{\partial g^m (c_j)}{\partial x_k}|_{g=f}}
{(f^{m-1})'(f(c_j))}$$ is maximal, i.e., equal to $r$.
As mentioned in Section \ref{intro}, Theorem \ref{tworatintro} covers many previous results in this direction (as well as finds new applications).
For example, consider the unicritical family $f_v(z)=z^d+v$. If
$0$ is a summable critical point of $f_{v_0}$, then, by Corollary \ref{c0}(2) below, Theorem \ref{tworatintro} applies, hence, (\ref{familyratintro}) holds and it turns into
$$\lim_{m\to \infty}\frac{\frac{d}{dv}|_{v=v_0}f_v^m (0)}
{(f^{m-1})'(v_0)}=\sum_{n=0}^\infty \frac{1}{(f^{n})'(v_0)}\neq 0.$$
This implies, in particular, that $f_{v_0}$ is unstable in the family $\{f_v\}_{v\in\C}$. Indeed, if $f_{v_0}$ were structurally stable in $\{f_v\}$, then the sequence of functions $\{f_v^m(0)\}$ near $v=v_0$ (hence, the sequence of derivatives $\{\frac{d}{dv}|_{v=v_0}f_v^m (0)\}$, too) would be
bounded, which, along with $(f^{m-1})'(v_0)\to\infty$ would imply that
$\frac{\frac{d}{dv}|_{v=v_0}f_v^m (0)}{(f^{m-1})'(v_0)}\to 0$, a contradiction, see \cite{Le1} for details.

More generally, let $f$ be a rational function (not a flexible Lattes map) with a summable critical point $c$ such that $\omega(c)$ is a C-compact. Then, by the {\bf Complement to Theorem \ref{tworatintro}} (with $r=1$), the inequality (\ref{familyratintro}) must hold, for some family $f_t(z)=f(z)+t u(z)+O(|t|^2)\in \Lambda$
with $u\not=0$. On the other hand, if we assume that $f$ is structurally stable in the (one-dimensional) space $\Lambda$ then repeating the argument for the unicritical family, the limit
in (\ref{familyratintro}) must be zero. This contradiction shows that such $f$ is unstable in $\Lambda$, therefore, also in a bigger space $\Lambda_f$, see Corollary 1.2 of \cite{Le} for more details. In particular, $f$ is unstable in the space of all rational maps of the same degree \cite{mak}.
%Indeed, for a structurally stable $f_{v_0}$, $v_0$ belongs to the interior of the Mandelbrot set, hence, the sequence $\{f_{v}^n(0)\}_{n\ge 0}$ %is uniformly bounded near $v_0$ which implies that
%$\frac{d}{dv}|_{v=v_0}f_t^m (0)}$ is a bounded sequence, a contradiction with

\begin{com}\label{herbt}
%A few remarks about the conditions (C)-(A$_{di}$) are in place.
\begin{itemize}
\item since $\mathcal{C}$ is closed and forward invariant, either $\mathcal{C}$ is nowhere dense or
$\mathcal{C}=\bar C=J(f)$, hence, under the condition (C), $\mathcal{C}$ has to be nowhere dense,
\item $\mathcal{C}$ is a C-compact if and only if $\omega(c_j)$ is a C-compact for every $j\in\{1,\cdots,r\}$.
%moreover, $\omega(c_j)$ is a C-compact if and only if $\overline{O^+(c_j)}$ is C-compact.
If $c_j\in\partial U$ where $U$ is a component of the Fatou set of $f$ (say, an iterate of $c_j$ is in the boundary of a Herman ring), then $\omega(c_j)\subset\cup_{n\ge 0}f^n(\partial U)$, hence
$\omega(c_j)$ is a C-compact (we use that $U$ is (pre-)periodic, by Sullivan's no wandering domain theorem),
%\item
%$\mathcal{H}_{discr,{\bf c}}$ is the union of closures of periodic orbits of those Herman rings $A$
    %=\cup\{\overline A: A\in\mathcal{H}_s({\bf c})\}$ where $A\in\mathcal{H}_s({\bf c})$ if and only if
%such that there is a pair of different indeces $1\le i<i'\le r$ with the property that
%$\{f^k(c_i)\}_{k\ge k_0}$ is dense in $L:=\cup_{k=0}^{q-1}f^k(L_A)$ and
%$\{f^k(c_{i'})\}_{k\ge k_0'}$ is dense in $L':=\cup_{k=0}^{q-1}f^k(L_A')$ where $L_A,L'_A$ are two components of $\partial A$ and $q$ is the period of $A$.
%By Definition \ref{defringcr}, $\mathcal{H}_{crit,{\bf v}}$ is empty, hence, the condition (A$_{cr}$) is void if all points $c_1,\cdots,c_r$ belong to the same grand orbit i.e., $f^{n_i}(c_i)=f^{n_j}(c_j)$ for any $i,j$ and some $n_i,n_j>0$ (e.g., (A$_{cr}$) is void if $r=1$ or if $r>1$ and $f(c_1)=f(c_2)=\cdots=f(c_r)$).
\end{itemize}
\end{com}
Let us list some classes of rational maps $f$ and corresponding sets $\mathcal{C}$ for which the conclusion of Theorem 1 of \cite{Le}(=conclusion of Theorem~\ref{tworatintro} of this note) holds:
\begin{coro}\label{c0}
The conclusion of Theorem~\ref{tworatintro} holds whenever $f$ is not a flexible Lattes and one of (1)-(8) takes place:
\begin{enumerate}
\item[(1)] $f$ has no Herman rings and $\mathcal{C}$ is a C-compact,
\item[(2)] $f$ is a polynomial,
\item[(3)] $J(f)=\bar\C$, and $\mathcal{C}$ is a C-compact,
%(4) All critical values of $f$ which are in $J(f)$ are summable (e.g., Collet-Eckmann), their $\omega$-limit sets are nowhere dense, and $f$ has no neutral cycles,
%(4) $f$ is structurally stable,
\item[(4)] $\mathcal{C}\neq\bar\C$ and the complement to $\mathcal{C}$ consists of a finitely many components,
%(in particular, this holds if $\mathcal{C}$ is totally disconnected),
%and $K$ is C-compact (e.g. of area zero),
\item[(5)] note two particular cases of (4):  (i) $\mathcal{C}$ is totally disconnected, for example, finite, (ii) $\mathcal{C}$
lies in a finite union of disjoint Jordan curves in $\bar\C$, for example, $\mathcal{C}\subset\R$,
\item[(6)] $f: \mathcal{C}\to \mathcal{C}$ is expanding (e.g., the critical points $c_1,\cdots,c_r$ satisfy Misiurewicz's condition),
%(7) $\chi_+(b_n)>0$ for some $n\ge 0$,
\item[(7)] the following two conditions hold:
\begin{enumerate}
\item[(7.1)] the area of $\mathcal{C}\setminus\partial\mathcal{H}_{crit,{\bf v}}$ is zero,
\item[(7.2)] either $\mathcal{H}_{crit,{\bf v}}$ is empty (e.g., all $c_1,\cdots,c_r$ are in a single grand orbit), or the boundary of every Herman rings of $f$ is locally connected.
    %$\mathcal{C}_\mathcal{H}$ is either a A-compact,
%the union $\mathcal{R}$ of the closures of Herman rings has at most countable inner boundary
%(i.e., except of at most countable set, every point of $\partial\mathcal{R}$ belongs to a component of the complement $\bar\C\setminus\mathcal{R}$),
\end{enumerate}
%and the union $\mathcal{R}$ of the closures of Herman rings has at most countable inner boundary
%(i.e., except of at most countable set, every point of $\partial\mathcal{R}$ belongs to a component of the complement $\bar\C\setminus\mathcal{R}$).
\item[(8)] All critical values of $f$ which are in $J(f)$ are summable. Here either $\mathcal{C}$ is nowhere dense or $\mathcal{C}=\bar\C=J(f)$.
%their $\omega$-limit sets are nowhere dense, and $f$ has no neutral cycles,
\end{enumerate}
\end{coro}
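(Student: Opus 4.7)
The plan is to verify, for each of (1)--(8), both conditions (C) and (A$_{cr}$) of Theorem \ref{tworatintro} (or their admissible weakenings from the Comments after Lemma \ref{Tcauchy} and from the \textbf{Complement to Theorem \ref{tworatintro}}), so that Theorem \ref{tworatintro} applies directly. Cases (1)--(3) all force $\mathcal{H}_{crit}({\bf v})=\emptyset$: by hypothesis in (1); because a polynomial has no Herman rings at all (every Fatou component is eventually attracted to the superattracting fixed point at $\infty$) in (2); and because $J(f)=\bar\C$ leaves no Fatou set in (3). Hence (A$_{cr}$) is vacuous in each. Condition (C) is assumed in (1) and (3); for (2), summability places $\omega(c_j)\subset J(f)$, which equals the boundary of the (open) attracting basin of $\infty$, so $\mathcal{C}$ is a closed subset of the boundary of an open set and is therefore a C-compact by the criterion recalled after Definition \ref{rset}.

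\emph{Cases (4) and (5).} By Comment \ref{herbt}, $\mathcal{C}$ is either nowhere dense or equals $\bar\C$, and the hypothesis $\mathcal{C}\neq\bar\C$ in (4) forces nowhere dense. Finitely many components in the complement makes $\mathcal{C}$ an A-compact, and a nowhere dense A-compact is a C-compact by Definition \ref{rset}, yielding (C). For (A$_{cr}$), each $A\in\mathcal{H}(\mathcal{C})$ is open, connected, and disjoint from $\mathcal{C}\subset J(f)$, so it lies in a single component of $\C\setminus\mathcal{C}$; thus only finitely many such $A$ occur, and $\mathcal{H}_{crit,{\bf v}}$ is a finite disjoint union of closed annuli whose complement in $\bar\C$ has finitely many components, hence is A-compact. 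Both parts of (5) are direct instances of (4): a totally disconnected compact in $\C$ has connected complement in $\bar\C$, and a compact inside a finite union of disjoint Jordan curves has complement equal to the finitely many Jordan domains those curves cut out.

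\emph{Cases (6) and (7).} In (6), expansion on $\mathcal{C}$ forbids any Herman ring boundary from lying in $\mathcal{C}$, since on such a boundary $f$ is semi-conjugate to an irrational rotation and therefore has zero Lyapunov exponent; hence $\mathcal{H}_{crit,{\bf v}}=\emptyset$ and (A$_{cr}$) is vacuous. Moreover, an expanding invariant set with $\mathcal{C}\neq\bar\C$ has area zero by standard hyperbolic-set arguments, giving (C) via Gamelin's area criterion. In (7), each boundary component of a Herman ring has area zero as the quasi-conformal image of a round circle, and combined with (7.1) this forces $\mathcal{C}$ to have area zero, giving (C). When $\mathcal{H}_{crit,{\bf v}}=\emptyset$, (A$_{cr}$) is vacuous; otherwise local connectivity of each boundary component turns it into a Jordan curve by Carath\'eodory's theorem, so $\mathcal{H}_{crit,{\bf v}}$ is a finite disjoint union of closed topological annuli whose complement has finitely many components, yielding (A$_{cr}$).

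\emph{Case (8) and the main obstacle.} If $\mathcal{C}=\bar\C=J(f)$, condition (C) is unavailable, so I replace it by the weakening $(\widetilde{CL})$ from the Comment following Lemma \ref{Tcauchy}: summability of all critical values in $J(f)$ is known to preclude an invariant line field on $J(f)$ by a McMullen-style area argument, and (A$_{cr}$) is vacuous as there is no Fatou set. If $\mathcal{C}$ is nowhere dense, the second bullet of Comment \ref{herbt} gives $\omega(c_j)$ as a C-compact whenever $c_j$ accumulates on the boundary of a Fatou component, while for the remaining indices summability yields enough shrinking of univalent pullbacks to conclude area zero for $\omega(c_j)$; (A$_{cr}$) is then handled exactly as in one of the preceding cases according to the topology of $\mathcal{H}_{crit,{\bf v}}$. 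The step I expect to be the main obstacle is (8) in the sub-case $J(f)=\bar\C$, where one must reproduce or cite the McMullen-type absence-of-line-field theorem in the needed generality; a secondary subtlety occurs in (7), where one must justify both the zero-area property of Herman ring boundaries and the passage from local connectivity to A-compactness of $\mathcal{H}_{crit,{\bf v}}$ (especially if closures of distinct rings in a cycle share boundary points).
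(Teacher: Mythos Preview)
Your overall architecture---verify (C) and (A$_{cr}$) case by case, invoking the Complement and Comment~2 when needed---matches the paper's route via Corollary~\ref{c1}. Cases (1)--(5) are essentially the same as the paper's. The remaining three cases have gaps or diverge.

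\textbf{Case (6).} Your assertion that ``on such a boundary $f$ is semi-conjugate to an irrational rotation and therefore has zero Lyapunov exponent'' is not a proof: the linearizing map $\psi_A$ need not extend continuously to $\partial\Delta_A$, so there is no semi-conjugacy on $\partial A$ in any sense strong enough to control derivatives. The paper instead proves a short stand-alone lemma: assuming $|Df|>2$ on a neighborhood of a boundary component $L$, one takes the minimal distance $h$ from $L$ to a fixed invariant curve $\Gamma=\psi_A(\{|w|=\rho\})$, pushes the realizing segment by the inverse rotation branch, and obtains a curve of length $<h/2$ joining $\Gamma$ to $L$, contradicting minimality. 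You should replace the Lyapunov heuristic by this direct geometric argument.

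\textbf{Case (7).} The claim that each boundary component of a Herman ring is ``the quasi-conformal image of a round circle'' is false in general; Herman ring boundaries need not be quasi-circles, and your area-zero conclusion does not follow from this. (You correctly flag this as a subtlety at the end.) The paper simply records that $K$ has measure zero in case (7) and that local connectivity forces closures of distinct rings to be disjoint with finitely many complementary components; your treatment of (A$_{cr}$) under local connectivity is fine.

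\textbf{Case (8).} Here the paper's argument is quite different from yours and relies on two specific external inputs. If $J(f)\neq\bar\C$, Rivera-Letelier's connecting lemma \cite{R-L} gives that $J(f)$ has measure zero \emph{and} $f$ has no Herman rings; hence $K\subset J(f)$ is automatically a C-compact and $\mathcal{H}_K=\emptyset$, so no case-split on whether $c_j$ accumulates on a Fatou boundary is needed. If $J(f)=\bar\C$, the paper does not verify (C) or $(\widetilde{CL})$ abstractly; instead it goes back into Step~III of the proof of Lemma~\ref{Tcauchy} to extract an invariant measurable line field on a positive-measure set, then invokes Rivera-Letelier--Shen \cite{RS} (almost every point is conical) together with McMullen's promotion argument to conclude $f$ is flexible Latt\`es. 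Your phrases ``McMullen-style area argument'' and ``summability yields enough shrinking of univalent pullbacks'' point in the right direction but do not identify these inputs; in particular, the nowhere-dense subcase is handled in one stroke by \cite{R-L}, not by the ad hoc split you propose.
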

Corollary~\ref{c0} follows immediately from Corollary \ref{c1} of Lemma~\ref{Tcauchy} which is stated right after the following comment about applications of Theorem \ref{tworatintro}:
%\begin{com}\label{appun}

Corollary~\ref{c0} implies that Corollaries 1.1-1.2 of \cite{Le} and their proofs remain untouched. Indeed, in Corollary 1.1 of \cite{Le} case (6) of the Corollary~\ref{c0} above applies, and in Corollary 1.2,\cite{Le},
Complement to Theorem \ref{tworatintro} (with $r=1$) applies (see discussion after Theorem \ref{tworatintro}).
%the last part of Comment \ref{herbt}.
By similar reasons, applications of Theorem 1 of \cite{Le} in \cite{gash} and in \cite{mag} are unaffected as well: in \cite{gash} $f$ is a polynomial (so case (2) of Corollary~\ref{c0} applies) and in \cite{mag} $f$ is expanding on $\mathcal{C}$, i.e., case (6) of Corollary~\ref{c0} works (in fact, the case (1) applies as well because $f$ as in \cite{mag} cannot have Herman rings).
%Finally, Remark 5.1 of \cite{LSS} is not affected, too, because in that case the corresponding set $\mathcal{C}$ of Lemma~\ref{contr}, see below, is finite: in the notations of \cite{LSS}, $K=\cup_{1\le k\le N}\{f^r(c_{i_k}): 1\le r\le m_k-1\}\cup\{f^s(c_{j_k}): 1\le s\le n_k-1\}$.
%(Note also that, ultimately, Remark 5.1 of \cite{LSS} is not a part of the proof of \cite{LSS} at all.)
%\end{com}

%Corollary~\ref{c0} follows immediately from the next consequence of Lemma~\ref{Tcauchy}:
Now
\begin{coro}\label{c1}
In the notations of Lemma \ref{Tcauchy}, $T_fH=H$ (where $H=\hat\mu$ with $supp(\mu)\subset J(f)$) implies $\mu=0$ if $f$ is not a flexible Lattes and
one of the conditions (1)-(8) of Corollary~\ref{c0} holds with the following obvious changes:
$\mathcal{C}$ should be replaced by $K\subset J(f)$ and, additionally, in cases (1)-(7),
$K$ is nowhere dense, and in case (7) $\mathcal{H}_{crit,(\bf v})$ is replaced by $\mathcal{H}_K$. In the case (8), $K$ can have interior points.
\end{coro}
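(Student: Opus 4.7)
The plan is to apply Lemma~\ref{Tcauchy}, Part~1, in each of the eight cases. The common strategy is to verify its hypotheses (CL) on $K$ and (AL) on $\mathcal{H}_K$, and then either to observe that $\mathcal{H}_K=\emptyset$---which forces $\mu=0$ immediately from the first conclusion of the lemma---or, when $\mathcal{H}_K$ may be nonempty, to show from the specific structure of $K$ that the non-atomic representation (\ref{repr}) cannot hold, again forcing $\mu=0$.

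Cases (1), (2), (3), (6), and (a subcase of) (8) follow the direct route $\mathcal{H}_K=\emptyset$. In (1) and (3) this is immediate, and (2) reduces to it via the classical absence of Herman rings for polynomials (the superattracting basin of $\infty$ rules out periodic annular Fatou components); in this last case (CL) holds because $K\subset J(f)$ is a closed subset of the boundary of the basin of $\infty$, hence C-compact. In (6), the topological conjugacy of the first-return map on any Herman ring boundary to an irrational rotation is incompatible with uniform expansion on $K$, so again $\mathcal{H}_K=\emptyset$. Case (8) splits: when $J(f)\neq\bar\C$ we have $K$ nowhere dense (cf.\ Comment~\ref{herbt}) and proceed as in (1)--(3); when $J(f)=\bar\C$ I would instead invoke the alternative formulation $(\widetilde{CL})$ noted after Lemma~\ref{Tcauchy}.

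For (4) and (5), (CL) is immediate from the finite-component hypothesis, and any Herman ring in $\mathcal{H}(K)$ is necessarily a component of $\C\setminus K$; hence $\mathcal{H}_K$ is a finite union of closed annuli whose complement has finitely many components, yielding (AL). Case (5)(i), $K$ totally disconnected, forces $\mathcal{H}_K=\emptyset$ because each component of a Herman ring boundary is a non-degenerate continuum; the same connectedness principle handles (5)(ii), since a rotation-domain boundary lying in a finite union of Jordan curves is very restrictive and in combination with the Hardy-space criterion of Lemma~\ref{Tcauchy} kills the $\mu_A$.

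The main obstacle is case (7), where $\mathcal{H}_K$ may genuinely be nonempty and one must actively defeat the representation (\ref{repr}). Here I would lean on the remark following Lemma~\ref{Tcauchy} that (CL) can be replaced by the no-invariant-line-field condition $(\widetilde{CL})$: the zero-area hypothesis (7.1) on $K\setminus\partial\mathcal{H}_K$ makes any measurable $f$-invariant line field trivial on that piece of $K$, while the local connectedness of Herman-ring boundaries in (7.2) secures (AL) for $\mathcal{H}_K$. Combining these with the Cauchy-transform/Hardy-space analysis of \cite{lll} underlying Lemma~\ref{Tcauchy} should eliminate any contribution to $\mu$ both outside and inside $\partial\mathcal{H}_K$. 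The delicate step I anticipate is precisely this split: confirming that the area condition genuinely discards the non-Herman-ring part of $\mu$, while the Hardy criterion, applied ring-by-ring, rules out the Herman-ring part under (7.1)--(7.2).
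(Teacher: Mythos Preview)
Your treatment of (1)--(3), (6), and (8) matches the paper's; for (6) the paper supplies a short direct lemma showing $f$ cannot be expanding on a Herman-ring boundary, and for (8) it also splits on $J(f)=\bar\C$, invoking \cite{R-L} when $J(f)\neq\bar\C$ (to get that $J(f)$ has zero area and $f$ has no Herman rings) and conical points via \cite{RS}, \cite{Pr} when $J(f)=\bar\C$.

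For (4), (5), (7) you diverge by trying to ``defeat the representation (\ref{repr}).'' The paper does not attempt this: it merely verifies (CL) in all of (1)--(7), and then either observes $\mathcal{H}_K=\emptyset$ (cases (1)--(3), (6)) or verifies (AL) (cases (4), (7)). That is the entire argument. Your proposed extra steps in (5)(ii) and (7) cannot work, because you have the Hardy criterion backwards: $1/\psi_A'\in H^1$ is \emph{necessary} for $\mu_A\neq 0$ in Part~{\bf 1}, but by Part~{\bf 2} of Lemma~\ref{Tcauchy} it is also \emph{sufficient} for the existence of a nonzero fixed point supported on $\cup_i\partial f^i(A)$. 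So under (7.2) with, say, $C^2$ Herman-ring boundaries, a nontrivial $T_f$-fixed $\hat\mu$ genuinely exists and no Hardy-space argument will ``kill the $\mu_A$.''

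The resolution is that the corollary is only used in the paper to derive Corollary~\ref{c0} via Lemma~\ref{contr}, where the relevant measure is \emph{discrete}; there (CL)+(AL) $\Rightarrow$ representation (\ref{repr}) $\Rightarrow$ $\mu$ non-atomic $\Rightarrow$ $\mu=0$. For a completely general $\mu$ the literal conclusion ``$\mu=0$'' does not follow from the paper's proof in (4) or (7) when $\mathcal{H}_K\neq\emptyset$, and you should not expect to establish it. Your instinct that more would be needed for the statement as written is correct; the paper's proof is really verifying the hypotheses (CL), (AL) for the intended application rather than the blanket claim.
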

\begin{proof} We handle here cases (1)-(7); for the case (8), see the end of Section \ref{mainlemma}.
%[of Corollary~\ref{c1}.]
In cases (1)-(7), $K\subset J(f)$ is a C-compact. Indeed, in cases (1),(3) it is a condition,
in cases (2) and (4), every point of $K$ belongs to a component of the complement, and in cases (6),(7), $K$ is of measure zero:
%this is obvious in case (6) and is well-known in case (7) (note that $K$ is disjoint with the set of critical points of $f$ in this case and, in particular, $J(f)\neq\bar\C$);
%in case (4), $K$ is also C-compact because the Lebesgue measure of $K$ is zero???????????????, see the Introduction of \cite{Le} for references.
It remains to note the following.
%If $f$ is a polynomial, then $f$ has no Herman rings.
In cases (1)-(3), $f$ have no Herman rings.
%as well: this is obvious in (3) and follows from \cite{R-L} in case (4). If (6) holds, then clearly $K$ cannot contain boundaries of Herman rings.
In case (4), the complement to $K$ as well as to $\mathcal{H}_K$
consist of finitely many components, therefore, $K$ is a C-compact (being also nowhere dense) and $\mathcal{H}_K$ is a A-compact.
%if $K$ contains no ???????????????????????????????????? boundaries of Herman rings, then we are in
%the case (A) of Lemma~\ref{contr}. On the other hand, if $K$ does contain boundaries of Herman rings $A$ of????????????????????????????????? a collections $\mathcal{H}$ then each $A\in\mathcal{H}$ is a component of the complement to $K$ and since the total number of such components is bounded hence each point of the boundary of
%$K\cup_{A\in\mathcal{H}}\overline A$ is a boundary points of a component of the complement to $K\cup_{A\in\mathcal{H}}\overline A$, hence, the latter set is a A-compact. Thus we are in the case (B) of
%Lemma~\ref{contr}.
%(6)-(8): since (6) implies (7), it is enough to prove only (7). But by the lemma, either (7) or (8) implies that (B) cannot hold.
%\end{com}
It remains to consider cases (6)-(7).
In case (7), if the boundary of every Herman ring is locally connected it is easy to see that
the closures of two different Herman rings are disjoint and the complement to the closure of every Herman ring consists of a finitely many components. Therefore, $\mathcal{H}_K$ is a A-compact.
%assumptions are that the area of $K\setminus\hat K$ is zero and $K_{\mathcal{H}}$ is a A-compact. As it is shown in the proof of Lemma \ref{contr}, given $x\in K\setminus\hat K$, $H=0$ in a neighborhood $U$ of $x$ and since the area of $K\setminus\hat K$ is zero, $\int_U |H|dxdy=0$.
%This implies that there are no points $b_k$ in $U$, hence, in fact, $\hat K=K_{\mathcal{H}}$, and Lemma \ref{contr} applies.
Finally, as for the case (6), if $f:K\to K$ is expanding then $K$ cannot contain the boundary of a Herman ring as it is shown in the next lemma.
\end{proof}
\begin{lem}
Let $\Omega$ be a Herman ring of a rational function $f$ which is invariant by $f$. Then, for either component $L$ of $\partial L$,
$f:L\to L$ is not expanding.
\end{lem}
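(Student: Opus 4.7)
The plan is to assume $f:L\to L$ is expanding and derive a contradiction by evaluating $\int_L \log|f'|\,d\omega$ in two incompatible ways, where $\omega$ is the harmonic measure of $\Omega$ on $L$. Write the expansion hypothesis as $|(f^n)'(z)|\ge C\beta^n$ for some $\beta>1$, all $z\in L$, $n\ge 1$; this immediately rules out critical points of $f$ on $L$, so $\log|f'|$ is continuous on a neighbourhood of $L$. Let $\psi:\Delta\to\Omega$ be a conformal uniformization from a round annulus $\Delta=\{1<|w|<R\}$ conjugating $f|_\Omega$ to the rotation $R_\lambda(w)=\lambda w$, $|\lambda|=1$; swapping the two boundary circles of $\Delta$ if needed, assume $L$ corresponds to $\{|w|=1\}$. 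Let $m$ be normalized arclength on $\{|w|=1\}$ and set $\omega:=\tilde\psi_{\ast}m$, where $\tilde\psi$ is the radial-limit extension of $\psi$ (existing $m$-a.e.\ by Fatou). The functional equation $\psi(\lambda w)=f(\psi(w))$ passes to radial limits off a countable set, so $\omega$ is $f|_L$-invariant.

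For the first evaluation, differentiating $\psi(\lambda w)=f(\psi(w))$ in $\Delta$ gives
\[
\log|f'(\psi(w))|=\log|\psi'(\lambda w)|-\log|\psi'(w)|.
\]
For each $r\in(1,R)$, $\log|\psi'|$ is continuous and bounded on the compact circle $\{|w|=r\}$, and by rotation invariance of normalized arclength $m_r$ the right-hand side integrates to zero, giving $\int\log|f'|\,d\nu_r=0$ with $\nu_r:=\psi_{\ast}m_r$. Since $\log|f'|$ is continuous and bounded near $L$ and $\nu_r\to\omega$ weakly as $r\to 1^+$ (dominated convergence applied to radial limits), we conclude $\int_L\log|f'|\,d\omega=0$.

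For the second evaluation, $f|_L$-invariance of $\omega$ together with the chain rule for $f^n$ gives
\[
\int_L\log|f'|\,d\omega \;=\; \frac{1}{n}\int_L\log|(f^n)'|\,d\omega\qquad(n\ge 1),
\]
and the expansion bound $|(f^n)'(z)|\ge C\beta^n$ on $L$ forces the right-hand side to be $\ge (\log C)/n+\log\beta$, yielding $\int_L\log|f'|\,d\omega\ge\log\beta>0$, which contradicts the vanishing obtained above. The only delicate point to verify carefully is the weak convergence $\nu_r\to\omega$ against the test function $\log|f'|$; but since expansion guarantees $\log|f'|$ is continuous and bounded in a neighbourhood of $L$, this reduces to a standard dominated-convergence argument for radial limits of $\psi$.
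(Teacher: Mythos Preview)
Your argument is correct (modulo a harmless slip: radial limits of $\psi$ exist off a \emph{null} set, not necessarily a countable one, but your use of dominated convergence only needs the former). The logic is sound: the conjugacy to a rotation makes $\log|f'\circ\psi|$ a coboundary $\log|\psi'|\circ R_\lambda-\log|\psi'|$, whose average over each concentric circle vanishes; passing to the boundary via radial limits gives Lyapunov exponent zero for the pushed-forward measure $\omega$, while expansion forces it to be at least $\log\beta>0$.

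The paper takes a completely different, purely metric route. It passes to an iterate so that $|f'|>2$ on a neighbourhood $U$ of $L$, fixes a nearby invariant analytic curve $\Gamma=\psi(\{|w|=\rho\})$ inside $U$, and lets $h=\dist(L,\Gamma)$, realised by a straight segment $(u,v]$ with $u\in L$, $v\in\Gamma$. This segment must lie entirely in the sub-annulus between $L$ and $\Gamma$, hence in $U$; applying the inverse branch $g=\psi R_\lambda^{-1}\psi^{-1}$ to it produces a curve from $g(v)\in\Gamma$ to $L$ of length $<h/2$, contradicting minimality of $h$. This is shorter and avoids any measure theory or boundary-value theory for conformal maps. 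Your approach, on the other hand, identifies the conceptual reason (zero Lyapunov exponent forced by the rotational structure) and would adapt more readily to related statements about invariant measures on $\partial\Omega$.
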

\begin{proof} Assume the contrary. Then, passing to an iterate, one can find a neighborhood $U$ of $L$ such that $|Df(z)|>2$ for any $z\in U$.
Fix a conformal homeomorphism $\psi:\{1<|w|<r\}\to \Omega$ so that $f=\psi R\psi^{-1}$ where $R(w)=\lambda w$ is an irrational rotation. We can suppose that $L=\cap_{1<r'<r}\overline{\psi(\{1<|w|<r'\})}$. Fix $1<\rho<r$ so that $U_0:=\psi(\{1<|w|\le \rho\})$ is compactly contained in $U$.
Let $\Gamma=\psi(\{|w|=\rho\})$ and $h>0$ is the distance between two disjoint compact sets $L$ and $\Gamma$. Find $u\in L$ and $v\in \Gamma$ so that $h=|u-v|$. The interval $(u,v]$ must be a subset of $U_0$
because otherwise there would exist $u'\in L$, $v'\in \Gamma$ with $|u'-v'|<h$. Let $g:=\psi R^{-1}\psi^{-1}$ the branch of $f^{-1}$ leaving $\Omega$ invariant and let $\gamma=g((u,v])$. Then $\gamma$ is a semi-open
curve in $U_0$ which begins at $v_{-1}=g(v)$ and tends to $L$. Since $|Df(x)|>2$ for all $x\in\gamma$, the length of $\gamma$, $l(\gamma)<h/2$. As $v_{-1}\in\Gamma$ and $\gamma$ joins $v_{-1}$ and $L$, we arrive at a contradiction with the definition of $h$.
\end{proof}
%{\bf Acknowledgments}. We would like to thank Magnus Aspenberg for asking about the proof of Lemma 5.2 in \cite{Le},
%and Matthieu Astorg for discussions. We thank the referee for careful reading of the paper and helpful comments.
%, Feliks Przytycki and Alex Eremenko for discussions and Alexander Volberg for Example \ref{sv}.
\section{The Cauchy transform of measures}\label{cauchy}
Given a finite complex measure $\nu$ with $supp(\nu)\subset\C$, let
%a compact support on $\C$, let
$$\hat\nu(z)=\int\frac{d\nu(w)}{w-z}$$
be the Cauchy transform of $\nu$. For the following facts,
see e.g. \cite{garnett}. As $\nu$ is finite, by Fubini's theorem, $\int\frac{d|\nu|(w)}{|w-z|}$ (hence, $\hat\nu$) is locally in $L^1(dxdy)$. In particular, $\hat\nu$ exists almost everywhere on $\C$. Besides, $\hat\nu$ is holomorphic outside
of $supp(\nu)$, and $\nu(\infty)=0$ if $\nu$ has a compact support. Moreover, $\hat\nu\neq 0$ on a set of a positive area unless $\nu=0$.
The following two propositions are the main auxiliary statements we use. They appear
in \cite{lll} as Lemma 2.1 and Corollary 2.1, respectively.
\begin{prop}\label{remov}
\begin{enumerate}
\item [(a)] Any closed subset of a C-compact is C-compact.
\item [(b)]
Let $K$ be a nowhere dense compact in $\C$ and $\mu$ a measure on $K$. Suppose that for a neighborhood $W$ of a point $x\in K$, $K\cap\overline W$ is a C-compact and $\hat\mu=0$ on $W\setminus K$. Then $\mu$ vanishes on
$K\cap W$, i.e., $|\mu|(W)=0$.
\end{enumerate}
\end{prop}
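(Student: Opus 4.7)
The plan is to prove (a) by a Tietze extension argument, and (b) by Vitushkin's localization of $\mu$.

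\textbf{Part (a).} Let $F$ be a closed subset of a C-compact $E$. Since $E$ is nowhere dense so is $F$, which gives $A(F)=C(F)$ for free; it remains to prove $R(F)=C(F)$. Given $g\in C(F)$, I would use Tietze's theorem to produce $\tilde g\in C(E)$ extending $g$. As $E$ is C-compact, $\tilde g$ is the uniform limit on $E$ of rational functions $r_n$ with poles outside $E$, hence outside $F\subset E$; the restrictions $r_n|_F$ converge uniformly to $g$, so $g\in R(F)$.

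\textbf{Part (b).} For $\phi\in C_c^\infty(W)$ I would form Vitushkin's localization
$$T_\phi\mu \;:=\; \phi\,\mu \;-\; \tfrac{1}{\pi}\,\bar\partial\phi\cdot\hat\mu\, dA,$$
a finite complex measure (because $\bar\partial\phi$ is bounded of compact support and $\hat\mu\in L^1_{\mathrm{loc}}$). Using the distributional identity $\bar\partial\hat\mu=-\pi\mu$, a short integration by parts gives $\widehat{T_\phi\mu}=\phi\hat\mu$. Two support facts are then decisive. First, $\phi\mu$ sits on $K\cap\mathrm{supp}(\phi)$, while $\bar\partial\phi\cdot\hat\mu$ is supported in $\mathrm{supp}(\bar\partial\phi)\subset W$ and vanishes a.e.\ on $W\setminus K$ by the hypothesis $\hat\mu=0$ there; hence $T_\phi\mu$ is carried by the C-compact $K\cap\overline W$. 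Second, $\phi\hat\mu$ also vanishes off $K\cap\overline W$: outside $\mathrm{supp}(\phi)$ the factor $\phi$ kills it, and inside $\mathrm{supp}(\phi)\subset W$ the factor $\hat\mu$ is zero off $K$.

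Since $K\cap\overline W$ is C-compact, $R(K\cap\overline W)=C(K\cap\overline W)$, and by the standard duality $R(E)^{\perp}=\{\nu: \hat\nu\equiv 0 \text{ off } E\}$ the measure $T_\phi\mu$ must vanish; equivalently,
$$\phi\,\mu \;=\; \tfrac{1}{\pi}\,\bar\partial\phi\cdot\hat\mu\, dA.$$
I would then choose any relatively compact open $V\subset\subset W$ and $\phi\in C_c^\infty(W)$ with $\phi\equiv 1$ on $V$; since $\bar\partial\phi\equiv 0$ on $V$ the identity above restricted to $V$ reads $\mu|_V=0$. Exhausting $W$ by such $V$ yields $|\mu|(W)=0$. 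The main obstacle is arranging the localization so that both $T_\phi\mu$ and its Cauchy transform live on the single C-compact $K\cap\overline W$; the hypothesis $\hat\mu=0$ on $W\setminus K$ is precisely what makes this work, after which $R(K\cap\overline W)^{\perp}=\{0\}$ closes the argument at once.
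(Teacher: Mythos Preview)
The paper does not actually prove this proposition; it is quoted from \cite{lll} (as Lemma~2.1 there), so there is no in-paper argument to compare against. Your proof is correct and is the standard one from rational approximation theory.

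Part~(a) via Tietze is routine and fine.

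Part~(b) is precisely Vitushkin's localization technique (cf.\ \cite{gamelin}, \cite{garnett}): the identity $\widehat{T_\phi\mu}=\phi\hat\mu$ combined with the hypothesis $\hat\mu=0$ on $W\setminus K$ forces both $T_\phi\mu$ and its Cauchy transform to be carried by $K\cap\overline W$; then the duality $R(E)^{\perp}=\{\nu:\hat\nu\equiv 0\text{ off }E\}$ together with $R(K\cap\overline W)=C(K\cap\overline W)$ annihilates $T_\phi\mu$, and taking $\phi\equiv 1$ on $V\subset\subset W$ gives $\mu|_V=0$. One point worth making explicit for the reader: ``nowhere dense'' does not force $K$ to have zero area, so the density $\bar\partial\phi\cdot\hat\mu$ need not vanish a.e.\ on $K$; but its closed support is still contained in $K\cap\overline W$, which is all you need for $\widehat{T_\phi\mu}$ to be holomorphic off $K\cap\overline W$ and hence identically zero there. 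This is almost certainly the same argument that \cite{lll} gives.
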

\begin{prop}\label{rings}
Suppose $\mathcal{H}$ is a non empty collection of bounded rotation domains
of a rational function $f$.
Let $V=\cup\{A: A\in\mathcal{H}\}$, $E\subset\C\setminus V$ a nowhere dense compact subset such that
$\partial V\subset E$, and $\nu$ be a bounded complex measure supported on $E$ such that $\hat\nu=0$ off $E\cup V$. If $E$ is a C-compact and $\overline{V}$ is a A-compact, then $\nu$ is, in fact, supported on $\partial V=\cup_{A\in\mathcal{H}}\partial A$, $\nu|_{\partial A}$, $A\in\mathcal{H}$, are mutually singular and, for each  $A$, $\nu|_{\partial A}$ is absolutely continuous w.r.t. harmonic measure of $A$. In particular, $\nu$ is non-atomic.
Moreover, for each $A\in\mathcal{H}$,
%the function $\hat\nu\circ\psi_A'$ is in the $H^1$-Hardy space, i.e.,
$$\limsup_{\epsilon\to 0}\int_{\{z\in\Delta_A: \dist(z,\partial \Delta_A)=\epsilon\}}|\hat\nu\circ\psi_A(z)||\psi_A'(z)||dz|<\infty,$$
where $\psi_A:\Delta_A\to A$ is a holomorphic homeomorphism from a round annulus $\Delta_A$ onto $A$.
%Let $V$ be an open set such that each component $\Omega_i$ of $V$ is finitely connected and the collection $\{\Omega_i\}$ is a D-collection. Assume that $A(V)=A(\overline{V})=R(\overline{V})$. If the Cauchy transform of a measure $\nu$ on $\partial V$ vanishes off $\overline{V}$ then $\nu$ is supported on $\cup_i\partial\Omega_i$ and admits the representation (\ref{nurepres}).
%In particular, let $\Omega$ be a finitely connected domain without isolated points in $\partial\Omega$ such that a holomorphic isomorphism $\psi:\Delta\to\Omega$, for a circular domain $\Delta$, extends
%one-to-one on a set of full (arc) measure on $\partial\Delta$. Assume $A(\Omega)=A(\overline\Omega)=R(\overline\Omega)$.
%If the Cauchy transform of a measure $\nu$ on $\partial\Omega$ vanishes off $\overline{\Omega}$ then $\nu$ is absolutely continuous w.r.t. harmonic measure on $\partial\Omega$.
\end{prop}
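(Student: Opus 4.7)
The plan is to reduce the proposition to two pieces of machinery already available: the removable-singularity result for Cauchy transforms (Proposition~\ref{remov}) and an abstract F.\ and M.\ Riesz theorem for uniform algebras. First, I would localize $\nu$ to $\partial V$. For any $x\in E\setminus\partial V$, pick a neighborhood $W\ni x$ with $\overline W\cap\overline V=\emptyset$. Since $W\cap V=\emptyset$, the hypothesis $\hat\nu=0$ off $E\cup V$ gives $\hat\nu=0$ on $W\setminus E$. By Proposition~\ref{remov}(a) the set $E\cap\overline W$ is a C-compact, and Proposition~\ref{remov}(b) then forces $|\nu|(W)=0$. Hence $\mathrm{supp}(\nu)\subset\partial V$.

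Next, I would show $\nu\perp A(\overline V)$. After the first step, $\hat\nu$ is holomorphic on $\bar\C\setminus\partial V$ and vanishes on the open set $\bar\C\setminus(E\cup V)$. Since $E$ is nowhere dense, every connected component of $\bar\C\setminus\overline V$ contains a non-empty open subset of $\bar\C\setminus(E\cup V)$, so by the identity principle $\hat\nu\equiv 0$ on all of $\bar\C\setminus\overline V$. The Laurent expansion at $\infty$ gives $\int w^n\,d\nu(w)=0$ for every $n\ge 0$, so $\nu$ annihilates polynomials; for any rational $r$ with poles $p_j$ off $\overline V$, a partial-fraction decomposition together with $\hat\nu^{(k)}(p_j)=0$ yields $\int r\,d\nu=0$. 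The A-compactness hypothesis $A(\overline V)=R(\overline V)$ then promotes this to $\nu\perp A(\overline V)$.

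With $\nu$ an annihilating measure on $\partial V$ for the uniform algebra $A(\overline V)$, I would invoke the abstract F.\ and M.\ Riesz theorem of Glicksberg--K\"onig. Under A-compactness the Gleason parts of $A(\overline V)$ meeting $V$ are exactly the components $A\in\mathcal H$, and the representing measures for evaluation at points of $A$ are the harmonic measures of $A$. The theorem then produces the canonical decomposition $\nu=\sum_{A\in\mathcal H}\nu|_{\partial A}$, absolute continuity of each $\nu|_{\partial A}$ with respect to harmonic measure of $A$, and mutual singularity between pieces over distinct $A$. Non-atomicity of $\nu$ follows immediately.

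For the concluding $L^1$-bound on concentric curves, I would uniformize $A$ by $\psi_A\colon\Delta_A\to A$: the pullback $\hat\nu\circ\psi_A$ is holomorphic on the round annulus, and the change of variables converts $|\hat\nu\circ\psi_A|\,|\psi_A'|\,|dz|$ on level curves of $\Delta_A$ into $|\hat\nu(\zeta)||d\zeta|$ on the corresponding level curves of $A$ approaching $\partial A$; this is controlled by the variation of $\nu|_{\partial A}$ against harmonic measure by Step 3, giving the required $H^1$-type $\limsup$ bound through the standard annular means inequality. The main obstacle throughout is the identification in Step 3, i.e., verifying that the Gleason parts and representing measures of $A(\overline V)$ are really the components of $V$ and their harmonic measures. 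This is the precise content the A-compactness hypothesis contributes: it rules out pathological pinching or merging of $\overline V$ that would otherwise scramble the Gleason-part structure and block a component-wise application of the F.\ and M.\ Riesz decomposition.
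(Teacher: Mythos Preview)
The paper does not prove Proposition~\ref{rings}. As stated explicitly at the end of Section~\ref{cauchy}, both Propositions~\ref{remov} and~\ref{rings} are quoted from \cite{lll} (where they appear as Lemma~2.1 and Corollary~2.1 respectively); no argument is given here. There is therefore no proof in this paper against which to compare your proposal.

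On its own merits, your outline is a reasonable strategy and likely close in spirit to \cite{lll}. Steps~1--2 (localizing $\nu$ to $\partial V$ via Proposition~\ref{remov}, then showing $\nu$ annihilates $R(\overline V)=A(\overline V)$) are sound. In Step~3 you correctly flag the identification of Gleason parts with the components $A\in\mathcal H$ and of representing measures with harmonic measures as the crux; this is not automatic and is precisely what the A-compactness hypothesis must be used to establish before the abstract F.\ and M.\ Riesz theorem applies component-wise. Step~4 is the sketchiest part: absolute continuity of $\nu|_{\partial A}$ with respect to harmonic measure does not by itself give the $H^1$-type bound on $\hat\nu\circ\psi_A\cdot\psi_A'$ along concentric curves. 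What is actually needed is that the Cauchy integral of an $L^1(\partial A,\omega_A)$ density, pulled back by $\psi_A$, lies in the Hardy space $H^1(\Delta_A)$, and your phrase ``controlled by the variation of $\nu|_{\partial A}$'' hides this estimate. So the proposal is a credible plan, but Steps~3 and~4 each contain a real piece of work that is only named, not carried out.
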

\section{Proof of Lemma~\ref{Tcauchy}}\label{mainlemma}
Let us start with the part {\bf 1}.
We split the proof into Steps I-VIII. Note that most of arguments are not original and included for completeness. Namely, Step I is taken from the proof of Lemma 5.2, \cite{Le} and along with Steps II-V and the first claim of Lemma~\ref{vi} of Step VI are indeed minor modifications of~\cite{dht},~\cite{e},~\cite{Le1},~\cite{mak},~\cite{mult}. After that, the proof when $K$ contains no boundaries of Herman rings is straightforward, see Step VII.
In a short step VIII we deal with the general case applying Propositions~\ref{remov}-\ref{rings}.

We prove the part {\bf 1} by a contradiction. So let $H$ be the Cauchy transform of a finite complex measure $\mu$ that satisfies (\ref{muatinfty}).
Assume that $\mu\neq 0$, $T_fH=H$, and the conditions of Lemma~\ref{Tcauchy}, part {\bf 1} hold.

{\bf I}. The function
$$\tilde{H}(z):=H(z)+\frac{A}{z}+
\frac{B}{z^2}=
\int[\frac{1}{w-z}+\frac{1}{z}+
\frac{w}{z^2}]d\mu(w)=\int\frac{w^2}{z^2(w-z)}d\mu(w)$$
is integrable at $\infty$.
Indeed,
for every $w$, the function (of $z$)
$w^2/[z^2(w-z)]$ is integrable at $\infty$, and one can write:
$$\int_{|z|>1} \left|\frac{w^2}{z^2(w-z)}\right|d\sigma_z
=|w|\int_{|u|>1/|w|} \left|\frac{1}{u^2(1-u)}\right|d\sigma_u\le $$
%$$|b_n|\left(C_1+\int_{2>|x|>r/|b_n|} \frac{1}{|x|^2}d\sigma_x\right)
%\le |b_n|\left(C_2+2\pi \ln\frac{|b_n|}{r}\right)\le
$$C_1|w|(1+\ln^+|w|),$$
where $\ln^+|w|=\max\{0,\ln|w|\}$.
%Now, for every $R$ big enough, and using the
%condition (\ref{muatinfty}), we have:???????????????????????
Hence,
\begin{eqnarray*}
\int_{|z|>1}|\tilde{H}(z)| d\sigma_z\le
\int d|\mu|(w)\int_{|z|>1}
\left|\frac{w^2}{z^2(w-z)}\right|d\sigma_z\le
C_1\int|w|(1+\ln^+|w|)d|\mu|(w)<\infty,
\end{eqnarray*}
by the condition (\ref{muatinfty}).
%$H$ is analytic in each component of $K^c:=\C\setminus K$.
Now, take $R$ big enough and consider the disk $D(R)=\{|x|<R\}$.
We claim that
\begin{equation}\label{limr2}
\limsup_{R\to \infty} \left\{\int_{f^{-1}(D(R))} |H(x)|d\sigma_x-
\int_{D(R)} |H(x)|d\sigma_x\right\} \le 0.
\end{equation}
Indeed, in the case (1.1),i.e., if $A=B=0$, this follows at once from
the integrability of $H$ at $\infty$.
In the case (1.2),
the conditions on $\sigma$ imply that there is $a>0$, such that
\begin{equation}\label{f-1}
f^{-1}(D(R))\subset D(R+|b|+\frac{a}{R})
\end{equation}
(actually, $f^{-1}(D(R))\subset D(R)$, if $|\sigma|>1$).
On the other hand,
\begin{equation}\label{limr}
\lim_{R\to \infty} \int_{R<|x|<R+|b|+a/R} |H(x)|d\sigma_x=0
\end{equation}
since
$\tilde{H}(x)=H(x) + \frac{A}{x} + \frac{B}{x^2}$
is integrable at $\infty$,
%$\lim_{R\to \infty} \int_{R<|x|<R+|b|+a/R} |\hat H(x)|d\sigma_x=0$.
%But
and an easy calculation shows that the conditions in the case (1.2)
guarantee that
\begin{equation}
\lim_{R\to \infty} \int_{R<|x|<R+|b|+a/R} \left|\frac{A}{x}+\frac{B}{x^2}\right|d\sigma_x=0.
\end{equation}
%so (\ref{limr}) is proved. This,
(\ref{limr}) along with~(\ref{f-1}) imply
(\ref{limr2}) in the case (1.2).
As in the proof of Lemma 5.2, \cite{Le}, (\ref{limr2}) implies that
\begin{equation}\label{tri1}
|H(x)|=|T_fH(x)|=|\sum_{w: f(w)=x}\frac{H(w)}{f'(w)^2}|=\sum_{w: f(w)=x}\frac{|H(w)|}{|f'(w)|^2}
\end{equation}
almost everywhere.
Indeed, otherwise there is a
set $A\subset D(R_0)$ of positive measure (for some $R_0$)
and $\delta>0$, such that
$|T_fH(x)|<(1-\delta)\sum_{w: f(w)=x}\frac{|H(w)|}{|f'(w)|^2}$ for $x\in A$.
Then,
for all $R>R_0$,
$$\int_{D(R)} |H(x)|d\sigma_x=
%\int_{D(R)\setminus A} |H(x)|d\sigma_x+\int_{A} |H(x)|d\sigma_x=
%&=& \\
\int_{D(R)\setminus A} |T_fH(x)|d\sigma_x
+\int_{A} |T_fH(x)|d\sigma_x<$$
$$\int_{f^{-1}(D(R)\setminus A)} |H(x)|d\sigma_x+
(1-\delta)\int_{f^{-1}(A)} |H(x)|d\sigma_x=
%&=& \\
\int_{f^{-1}(D(R))} |H(x)|d\sigma_x-
\delta \int_{f^{-1}(A)} |H(x)|d\sigma_x$$
\iffalse
\begin{eqnarray*}
\int_{D(R)} |H(x)|d\sigma_x=
%\int_{D(R)\setminus A} |H(x)|d\sigma_x+\int_{A} |H(x)|d\sigma_x=
%&=& \\
\int_{D(R)\setminus A} |T_fH(x)|d\sigma_x
+\int_{A} |T_fH(x)|d\sigma_x&<& \\
\int_{f^{-1}(D(R)\setminus A)} |H(x)|d\sigma_x+
(1-\delta)\int_{f^{-1}(A)} |H(x)|d\sigma_x=
%&=& \\
\int_{f^{-1}(D(R))} |H(x)|d\sigma_x-
\delta \int_{f^{-1}(A)} |H(x)|d\sigma_x,
\end{eqnarray*}
\fi
which contradicts~(\ref{limr2}).
%Hence,
%\begin{equation}\label{tri1}
%|H(x)|=|T_fH(x)|=|\sum_{w: f(w)=x}\frac{H(w)}{f'(w)^2}|=\sum_{w: f(w)=x}\frac{|H(w)|}{|f'(w)|^2}
%\end{equation}
%almost everywhere.

{\bf II}. %Since $H=\hat m$, the Cauchy transform
%of a finite complex measure
%$$m=\sum_{k\ge 0}\alpha_k \delta_{b_k}$$
%and $H$ is integrable on compacts of $\C$,
%then $H$ is well-defined on a set $Y\subset \C$ such that $\C\setminus Y$ has zero Lebesgue measure.
%Moreover, since $m$ is a non-zero measure, $H(x)\neq 0$ on a set $Z$ of positive Lebesgue measure.
By Sect \ref{cauchy}, $H$ is well-defined on a set $Y\subset \C$ such that $\C\setminus Y$ has zero Lebesgue measure and
$H(x)\neq 0$ on a set $Z\subset Y$ of a positive Lebesgue measure. Replacing $Y$ by $\cap_{n\in\Z}f^n(Y)$,
one can assume that $Y$ is completely invariant.
(\ref{tri1}) immediately implies
\begin{lem}\label{iii}
For every measurable $A\subset \C$ such that
$$\Lambda(A):=\int_{A}|H(z)|d\sigma_z<\infty,$$
we have:
$$\Lambda(A)=\Lambda(f^{-1}(A)).$$
In other words, $\Lambda$ is an $f$-invariant positive measure on $\C$ (which is finite in the case (1.1) though not necessary finite in the case (1.2)).
\end{lem}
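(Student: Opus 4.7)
The plan is to derive the $f$-invariance of $\Lambda$ directly from the sharp identity (\ref{tri1}) established in Step I, combined with the standard branched change-of-variables formula for the rational map $f:\C\to\C$. The entire argument is essentially a one-line computation; what matters is lining up the right two ingredients.

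First, I would record the co-area identity for $f$. Since the critical set of $f$ has zero Lebesgue area and the real Jacobian of $f$ at any non-critical point $w$ equals $|f'(w)|^{2}$, for any non-negative Borel function $G$ on $\C$ one has
\begin{equation*}
\int_{\C}G(w)\,|f'(w)|^{2}\,d\sigma_{w}\;=\;\int_{\C}\Bigl(\sum_{w:\,f(w)=x}G(w)\Bigr)d\sigma_{x}.
\end{equation*}
Applying this with $G(w)=|H(w)|\,|f'(w)|^{-2}\,\1_{f^{-1}(A)}(w)$ and using $\1_{f^{-1}(A)}(w)=\1_{A}(f(w))$ to pull the indicator across the preimage sum yields
\begin{equation*}
\int_{f^{-1}(A)}|H(w)|\,d\sigma_{w}\;=\;\int_{A}\Bigl(\sum_{w:\,f(w)=x}\frac{|H(w)|}{|f'(w)|^{2}}\Bigr)d\sigma_{x}.
\end{equation*}

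Second, I would invoke (\ref{tri1}), which asserts precisely that the inner sum on the right equals $|H(x)|$ for almost every $x\in\C$. Substituting, the right-hand side becomes $\int_{A}|H(x)|\,d\sigma_{x}=\Lambda(A)$, while the left-hand side is $\Lambda(f^{-1}(A))$ by definition. This gives $\Lambda(f^{-1}(A))=\Lambda(A)$, and in particular finiteness of either side is equivalent to finiteness of the other, so the statement is well-posed as written.

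There is no genuine obstacle, since (\ref{tri1}) has already done the decisive work of collapsing the modulus of a phase-bearing sum into a sum of moduli; the present lemma is merely its integrated form. One should only note that the critical values of $f$, being finite in number, may be excluded at no cost from either integral, and that the parenthetical remark about finiteness of $\Lambda$ on all of $\C$ is a separate observation: in case (1.1), Step I yields integrability of $H$ at infinity so that $\Lambda(\C)<\infty$, whereas in case (1.2) only local integrability is available and $\Lambda$ need only be $\sigma$-finite.
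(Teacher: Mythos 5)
Your proof is correct and follows the same route the paper intends: the paper simply asserts that (\ref{tri1}) ``immediately implies'' the lemma, and your computation --- the branched change-of-variables formula for $f$ followed by substitution of (\ref{tri1}) --- is exactly the omitted one-line verification. The remarks on excluding critical values and on finiteness versus $\sigma$-finiteness in cases (1.1) and (1.2) are also consistent with the paper.
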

{\bf III}.
\begin{lem}\label{iv}
Assume $x\in Y$, $f'(x)\neq 0$ and $H(x)\neq 0$. Then there is a real constant $L_x\ge 1$
such that
\begin{equation}\label{3.1}
H(f(x))(f'(x))^2=L_x H(x).
\end{equation}
In particular, $x\in Z$ implies $f(x)\in Z$.
If $f(x),x$ are in $K^c:=\C\setminus K$ then there is a neighborhood $U$ of $x$ so that (\ref{3.1}) holds for all $x\in U$ and, moreover, $L_x$ is a constant function in $U$.
\end{lem}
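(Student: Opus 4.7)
The plan is to combine the identity $T_fH=H$ with the triangle-inequality saturation (\ref{tri1}) derived in Step I. Both identities hold Lebesgue-a.e., so after intersecting $Y$ with the corresponding full-measure set and re-imposing complete invariance (as in Step II), we may assume that for every $x\in Y$ with $f'(x)\neq 0$ both
$$H(f(x))=\sum_{w\in f^{-1}(f(x))}\frac{H(w)}{f'(w)^2}\quad\text{and}\quad |H(f(x))|=\sum_{w\in f^{-1}(f(x))}\frac{|H(w)|}{|f'(w)|^2}$$
hold simultaneously. The equality case of the triangle inequality then forces the complex numbers $H(w)/f'(w)^2$, $w\in f^{-1}(f(x))$, to be non-negative real multiples of a single unit vector.

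Write $y=f(x)$. Since every term in the second sum is non-negative and $|H(x)|/|f'(x)|^2>0$ is one of them, one obtains $|H(y)|\ge |H(x)|/|f'(x)|^2>0$, so $H(y)\neq 0$; this already gives the stated implication $x\in Z\Rightarrow f(x)\in Z$. The common unit direction of the $H(w)/f'(w)^2$ must then equal $H(y)/|H(y)|$, so we may write $H(w)/f'(w)^2=\alpha_w H(y)$ with $\alpha_w\ge 0$ and $\sum_{w\in f^{-1}(y)}\alpha_w=1$. Specialising to $w=x$ gives $H(x)/(f'(x))^2=\alpha_x H(y)$ with $\alpha_x\in(0,1]$, and $L_x:=1/\alpha_x\in[1,\infty)$ verifies (\ref{3.1}).

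For the neighborhood assertion, assume $x,f(x)\in K^c=\C\setminus K$. Because $H=\hat\mu$ is holomorphic off $K=\mbox{supp}(\mu)$, while $H(x)\neq 0$ and $f'(x)\neq 0$, the ratio
$$\Phi(\zeta):=\frac{H(f(\zeta))(f'(\zeta))^2}{H(\zeta)}$$
is holomorphic on a connected open neighborhood $U$ of $x$ contained in $K^c\cap f^{-1}(K^c)$ on which $H$ and $f'$ are nowhere vanishing. By the first part, $\Phi(\zeta)\in[1,\infty)\subset\R$ for almost every $\zeta\in U$; hence $\mbox{Im}\,\Phi$ vanishes on a set of positive two-dimensional Lebesgue measure, and being real-analytic on $U$ it vanishes identically. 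Therefore $\Phi$ is a real-valued holomorphic function on $U$, hence a constant, so $L_\zeta\equiv\Phi(\zeta)$ is constant on $U$. The only real obstacle is the routine bookkeeping needed to pass from the almost-everywhere identities to a statement at an individual $x\in Y$, which is absorbed into the completely invariant redefinition of $Y$ made in Step II; the substantive content is the equality case of the triangle inequality together with the identity-theorem argument for $\Phi$.
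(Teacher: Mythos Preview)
Your proof is correct and follows essentially the same approach as the paper: both deduce (\ref{3.1}) from the equality case of the triangle inequality in (\ref{tri1}), and both handle the neighborhood claim by observing that $\Phi(\zeta)=H(f(\zeta))(f'(\zeta))^2/H(\zeta)$ is a real-valued holomorphic function on a connected open set, hence constant. Your write-up is simply more explicit about the bookkeeping (passing from a.e.\ identities to pointwise ones by refining $Y$, and invoking real-analyticity of $\operatorname{Im}\Phi$), whereas the paper's proof is a terse two-sentence version of the same argument.
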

\begin{proof}
(\ref{3.1}) follows at once from (\ref{tri1}). If, additionally, $f(x),x\in K^c$ then there is a connected neighborhood
$U$ of $x$ such that $U\cup f(U)\subset K^c$ and $H,f'$ do not admit $0$ in $U$. It follows that
$L_x=H(f(x))(f'(x))^2/H(x)$ is a real holomorphic function in $x\in U$, therefore, a constant.
\end{proof}
Let
$$l(z)=\overline{H(z)}/|H(z)|$$
whenever $H(z)$ is well-defined and not zero. Since $L_x>0$ in (\ref{3.1}), it follows that
$$l(f(z))\overline{f'(z)}/f'(z)=l(z)$$
whenever $z\in Z$ and $f'(z)\neq 0$
%$H(z),H(f(z))$ is well-defined and $H(z),f'(z)$ are not zero.
This means that
$l(z)\overline{dz}/dz$ is
an invariant line field defined initially on the set of all $z\in Z$, $f'(z)\neq 0$.
%$z$ so that $H(z),H(f(z))$ are well-defined and $H(z),f'(z)\neq 0$.

%{\bf IV}.
Consider the case $J(f)=\bar\C$. The condition (CL) that $K$ is a C-compact implies that $K$ is nowhere dense, hence, $\C\setminus K$ is a non-empty open set. Assume by contradiction that $H(z)\neq 0$ on a non-empty open $U\subset \C\setminus K$.
Then $l(z)\overline{dz}/dz$ is an invariant holomorphic linefield on $U$. Therefore, by Lemma 3.16 of \cite{mcm},
%[McMullen,Complex Dynamics and Renormalization, pp 49-50],
$f$ is a flexible Lattes map which contradicts to condition (i). Thus $H\equiv 0$ off a C-compact $K$, hence, by (classical) fact of Lemma 5.3, \cite{Le}, $H\equiv 0$, i.e., we are done in this case.

{\bf IV}. From now on, $J(f)\neq \bar\C$.
%, i.e., $F(f):=\bar\C\setminus J(f)$ is a non-empty open set.
To deal with the (non-)integrability of $H$ at $\infty$ we use the following
\begin{lem}\label{paraint}
Let $g$ be a local holomorphic map in a neighborhood of $0$ such that $g(0)=0$, $g'(0)=1$. Let
$h(z)=\tilde A/z^3 + \tilde B/z^2 + \tilde h(z)$  where $\tilde A, \tilde B\in\C$ and $\tilde h$ is an integrable function in a neighborhood of $0$. Assume that either (a) $g(z)=z+O(z^3)$ or (b) $g(z)=z+O(z^2)$ and $\tilde A=0$.
Then:
\begin{itemize}
\item[(1)] every attracting petal $P$ of $g$ at $0$ contains a domain $U_P$ such that $g(U_P)\subset U_P$, $U_P\setminus g(U_P)$ contains a disk, $0\in\partial U_P$, every forward orbit in $P$ enters $U_P$
and
$$\int_{U_P}|h(z)|d\sigma_z<\infty,$$
\item[(2)] there is an open set $U_{-}$ such that $g^{-1}(U_{-})\subset U_{-}$, the union of $U_{-}$ with all attracting petals of $g$ at $0$ and the point $\{0\}$ constitutes a neighborhood of $0$ and
$$\int_{U_{-}}|h(z)|d\sigma_z<\infty.$$
\end{itemize}
\end{lem}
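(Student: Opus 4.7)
The plan is to work in Fatou coordinates at the parabolic fixed point $0$. Write $g(z) = z + a z^{n+1} + O(z^{n+2})$ with $a \neq 0$; hypothesis (a) forces $n \geq 2$, while (b) allows any $n \geq 1$ but uses $\tilde A = 0$. On each attracting petal $P$ there is a Fatou coordinate $\phi$ conjugating $g$ to $\tilde g(w) := \phi\circ g\circ\phi^{-1}(w) = w + 1 + O(1/w)$ on a right half-plane, with $|\phi^{-1}(w)|\asymp|w|^{-1/n}$ and $|(\phi^{-1})'(w)|^2\asymp|w|^{-2(n+1)/n}$ as $\Re w\to\infty$.

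For Part (1) I would take a thin sub-petal
\[
U_P := \phi^{-1}\bigl(\{w : \Re w > M,\ |\Im w| < (\Re w)^\alpha\}\bigr),
\]
with $M$ large and $\alpha \in (0, 1)$ chosen to satisfy $\alpha < 1 - (k-2)/n$ for the relevant $k\in\{2,3\}$: in case (a) this forces $\alpha < 1 - 1/n$, positive since $n \geq 2$; in case (b) only $\alpha < 1$ is required. The structural properties in the statement are then routine checks in Fatou coordinates: forward invariance because $\tilde g$ translates the curvilinear strip approximately by $+1$ (with the boundary growth $\alpha(\Re w)^{\alpha-1}$ absorbing the $O(1/w)$ imaginary drift); the fundamental domain $U_P\setminus g(U_P)$ corresponding to $\{M<\Re w<M+1,\,|\Im w|<(\Re w)^\alpha\}$, whose $\phi^{-1}$-image is open of positive area and hence contains a Euclidean disk; $0\in\partial U_P$ because $\phi^{-1}(w)\to 0$ as $\Re w\to+\infty$; and every forward orbit in $P$ eventually entering $U_P$, since $\phi(g^k(z_0)) = \phi(z_0) + k + O(\log k)$ has real part diverging while its imaginary part stays bounded. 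For the integrability, the change of variable $z=\phi^{-1}(w)$ converts the $|\tilde A|/|z|^3$ and $|\tilde B|/|z|^2$ contributions to $\asymp|w|^{(k-2n-2)/n}\,d\sigma_w$; Fubini on $\{\Re w>M,\,|\Im w|<(\Re w)^\alpha\}$ gives $\int_M^\infty (\Re w)^{\alpha+(k-2n-2)/n}\,d(\Re w)$, which converges exactly under our choice of $\alpha$. The remainder $\tilde h$ contributes finitely since $U_P$ lies in an arbitrarily small neighborhood of $0$.

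For Part (2), applying the construction to $g^{-1}(z) = z - a z^{n+1} + O(z^{n+2})$, which satisfies the same hypothesis, each repelling petal $Q_j$ of $g$ (an attracting petal of $g^{-1}$) yields a subset $U_{Q_j}\subset Q_j$ with $g^{-1}(U_{Q_j})\subset U_{Q_j}$, $0\in\partial U_{Q_j}$, and finite $\int_{U_{Q_j}}|h|\,d\sigma$. Setting $U_-:=\bigcup_j U_{Q_j}$ gives a backward-invariant open set with the required integrability bound. The coverage assertion reduces to the Leau--Fatou flower theorem: wide attracting petals plus wide repelling petals plus $\{0\}$ form a neighborhood of $0$, and by taking the attracting petals as wide as possible (using the extended Fatou coordinate on the full attracting basin) one can arrange that the residual gap around each repelling direction is narrow enough to be contained in the corresponding $U_{Q_j}$.

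The main obstacle will be this last matching of widths: ensuring that the narrow sub-petals $U_{Q_j}$, whose shape is dictated by integrability, really absorb the gap left by the wide attracting petals. This requires tuning the parameters $\alpha$ and $M$ of the two constructions compatibly, and in the edge case $n=1$ in case (b) the admissible range of $\alpha$ close to $1$ is exactly what is needed, so the verification of coverage becomes a direct geometric argument rather than a purely asymptotic one.
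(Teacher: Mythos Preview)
Your proposal is correct and follows essentially the same approach as the paper: define $U_P$ in a Fatou-type coordinate as a thin strip $\{\Re w>M,\ |\Im w|<(\Re w)^\epsilon\}$, verify forward invariance and orbit absorption, estimate the integral of $|h|$ over it directly, and then obtain Part~(2) by applying Part~(1) to $g^{-1}$. The only cosmetic differences are that the paper works with the intermediate coordinate $w=1/(\nu z^\nu)$ and carries the integrability estimate back to a cusp $\{|v|\lesssim u^\gamma,\ \gamma>2\}$ in the $z$-plane rather than changing variables to $w$, and that the paper takes the sufficient $\epsilon<1/\nu$ whereas your $\alpha<1-1/n$ is the sharp threshold; your flagging of the width-matching in Part~(2) is apt and is in fact handled more explicitly in your outline than in the paper's one-line justification.
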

\begin{proof} (1) Begin with a remark that given a local holomorphic injection
(coordinate change) $\psi$ near $0$ such that $\psi(0)=0$ and $\psi'(0)=1$ it is easy to see that enough to show the existence of $\tilde U$ instead of $U$ as in (1) for
$\tilde g=\psi^{-1}\circ g\circ\psi$ and $h\circ\psi$ instead of $g$ and $h$ respectively and then let
$U=\psi(\tilde U)$. Consider the case (a). One can assume that the asymptotic attracting direction of $\tilde U$ is the positive real axis. We use classical facts about local dynamics near a parabolic fixed point,
see e.g., \cite{cg}.
There exists a local coordinate change $\psi$ such that $\tilde g(z)=z-z^{\nu+1}+\alpha z^{2\nu+1}+O(|z|^{2\nu+2})$ where $\nu\ge 2$ (since $g(z)=z+O(z^3)$). Moreover, making a local change of coordinate $w=l(z):=1/(\nu z^\nu)$
in the attracting petal $\tilde P=\psi^{-1}(P)$ of $\tilde g$ we get
$F(w)=l\circ\tilde g\circ l^{-1}(w)=w+1+C/w+O(1/|w|^2)$ as $w\to\infty$ in $P_\infty=l(\tilde P)$ which contains a set of the form $\{w=w_1+iw_2: w_1>M_0-\kappa|w_2|\}$ for some big $M_0>0$ and $\kappa>0$.
Finally, there is a Fatou coordinate $\Psi: U_\infty\to\C$ such that $\Psi(w)=w-C\log w+O(1)$ for the main branch of $\log$ such that $\Psi\circ F(w)=\Psi(w)+1$. Now, given $\epsilon>0$ and $M>0$ let
$U_\infty(\epsilon,M):=\{w=w_1+i w_2: w_1>M, |w_2|<w_1^\epsilon\}$. It is straightforward to check that given $\epsilon>0$ there is $M_\epsilon$ such that $F(U_\infty(\epsilon,M_\epsilon))\subset U_\infty(\epsilon,M_\epsilon)$. Making $M_\epsilon$ even bigger if necessary and using the asymptotics for the Fatou coordinate we check also that
every forward orbit of $F$ in $P_\infty$ enters $U\infty(\epsilon,M_\epsilon)$. Let us fix $\epsilon\in (0,1/\nu)$ and the corresponding $U_\infty:=U_\infty(\epsilon,M_\epsilon)$. Let $\tilde U:=l^{-1}(U_\infty)\subset\tilde P$. Now, if $w=w_1 \pm i w_1^\epsilon\in\partial U_\infty$, for a big $w_1>0$, then $z=l^{-1}(w)=1/(\nu w)^{1/\nu}(w)=u+iv$ where $u=\nu^{-1/\nu} w_1^{-1/\nu}(1+O(w_1^{-2+2\epsilon}))$ and $v=\pm\nu^{-1-1/\nu} w_1^{-1-1/\nu+\epsilon}(1+O(w_1^{-1+\epsilon}))$. Therefore, as $v\to 0$
the following asymptotics holds: $|v|=B u^\gamma+O(|u|^{\gamma'})$
where $\gamma=\nu+1-\epsilon\nu>2$ (as $\nu\ge 2$ and $0<\epsilon<1/\nu$) and $\gamma'=\gamma+\nu(1-\epsilon)>\gamma$. Since $h(\psi(z))=O(|z|^{-3})$ we then get that $\int_{\tilde U}|h(\psi(z))|dxdy<\infty$.
The case (b) is very similar to (a) though simpler and is left to the reader.

Part (2) follows if we apply part (1) to attracting petals of the local inverse $g^{-1}$ finding for each such petal $R_j$, $1\le j\le \nu$, the corresponding to $g^{-1}$ set $U_{R_j}$. Let $U_{-}=\cup_{j=1}^\nu U_{R_j}$. Since each forward orbit in $R_j$ by $g^{-1}$ enters $U_{R_j}$,
the union of $U_{-}$ and the attracting petals of $g$ is a punctured neighborhood of $0$.
\end{proof}

{\bf V}. Let $\Omega$ be a component of $F(f)$.

Recall that $\Lambda$ is a measure introduced in Lemma \ref{iii}, Step II.
%By Sullivan's no wandering domain theorem, $\Omega$ is either periodic or pre-periodic.
\begin{lem}\label{v}
If $\Omega$ is not periodic then $H|_{\Omega}=0$.
\end{lem}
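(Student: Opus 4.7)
The plan is to exploit the $f$-invariance of the positive measure $\Lambda$ from Lemma~\ref{iii} together with the fact that the backward orbit of a non-periodic Fatou component consists of pairwise disjoint sets.

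First, by Sullivan's no wandering domain theorem, the non-periodic component $\Omega$ must be preperiodic: there is a minimal $n\ge 1$ such that $f^n(\Omega)$ lies in a periodic cycle of Fatou components. In particular $\Omega$ is bounded in $\C$, since under the normalization $f(z)=\sigma z+b+O(1/z)$ either $\infty\in J(f)$ or $\infty$ belongs to a (periodic) Fatou component fixed by $f$, which is different from the non-periodic $\Omega$.

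Next I claim that the family $\{f^{-k}(\Omega)\}_{k\ge 0}$ is pairwise disjoint. Indeed, if $k>j\ge 0$ and $x\in f^{-k}(\Omega)\cap f^{-j}(\Omega)$, then $f^j(x)\in\Omega$ and $f^{k}(x)=f^{k-j}(f^j(x))\in\Omega$, so $f^{k-j}(\Omega)\cap\Omega\ne\emptyset$; since two Fatou components either coincide or are disjoint, this would force $f^{k-j}(\Omega)=\Omega$ and make $\Omega$ periodic, a contradiction. Because $\Omega$ is bounded and $H$ is locally in $L^1$, we have $\Lambda(\Omega)<\infty$, and iterating Lemma~\ref{iii} yields $\Lambda(f^{-k}(\Omega))=\Lambda(\Omega)$ for every $k\ge 0$.

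Suppose for contradiction that $\Lambda(\Omega)>0$. Then by disjointness,
\[
\Lambda\Big(\bigcup_{k\ge 0}f^{-k}(\Omega)\Big)=\sum_{k\ge 0}\Lambda(\Omega)=+\infty,
\]
so to reach a contradiction it suffices to exhibit a $\Lambda$-finite open set containing the entire backward orbit of $\Omega$. In case (1.1) this is immediate: $A=B=0$ makes $\hat\mu$ integrable at $\infty$, so $\Lambda(\C)<\infty$. In case (1.2) one argues locally at $\infty$. When $|\sigma|>1$ the inclusion $f^{-1}(D(R))\subset D(R)$ from Step~I confines the backward orbit to a fixed disk. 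In the parabolic subcases of (1.2), namely $\sigma=1$ with $A=0$ or $\sigma^q=1$ with $b=0$, one passes to the coordinate $w=1/z$ at $\infty$ and applies Lemma~\ref{paraint} to the corresponding iterate of $f$: the integrability of $\tilde{H}(z)=H(z)+A/z+B/z^2$ at $\infty$ established in Step~I, together with the specific form of the parabolic expansion given by the normalization of (1.2), provides the hypothesis $\tilde A=0$ needed by part (a) or (b) of Lemma~\ref{paraint}. The resulting backward-invariant region $U_{-}$ satisfies $f^{-1}(U_{-})\subset U_{-}$ and $\int_{U_{-}}|H|\,d\sigma<\infty$. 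Hence the backward orbit of $\Omega$ decomposes into its intersection with $U_{-}$ plus a part contained in a bounded region, both of finite $\Lambda$-mass, contradicting the divergence above.

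Therefore $\Lambda(\Omega)=0$, i.e.\ $H=0$ almost everywhere on $\Omega$. Since $\Omega$ is a Fatou component and $\mathrm{supp}(\mu)\subset K\subset J(f)$, the function $H=\hat\mu$ is holomorphic on $\Omega$, so vanishes identically there. I expect the main obstacle to be the bookkeeping in case (1.2): one must verify that the backward-invariant neighborhood supplied by Lemma~\ref{paraint} is large enough to absorb the tail of the backward orbit accumulating at $\infty$, while the complementary region remains bounded and hence of finite $\Lambda$-measure.
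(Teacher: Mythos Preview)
Your argument is essentially the paper's: disjointness of $\{f^{-k}(\Omega)\}_{k\ge 0}$, invariance of $\Lambda$, and a case split on (1.1)/(1.2) to bound $\Lambda$ on a set containing the whole backward orbit. The only soft spot is precisely the ``bookkeeping'' you flag in the parabolic subcase. As written, the sentence ``the backward orbit of $\Omega$ decomposes into its intersection with $U_{-}$ plus a part contained in a bounded region'' is not yet justified: the backward invariance $f^{-1}(U_{-})\subset U_{-}$ alone does \emph{not} force the part outside $U_{-}$ to be bounded, since the complement of $U_{-}$ near $\infty$ contains the attracting petals of $f$, which are unbounded.

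The missing observation (and this is exactly how the paper closes the argument) is that $\bigcup_{k\ge 0} f^{-k}(\Omega)\subset \C\setminus\Omega_\infty$, where $\Omega_\infty$ is the immediate parabolic basin at $\infty$: indeed $f(\Omega_\infty)=\Omega_\infty$, so no preimage component of the non-periodic $\Omega$ can equal $\Omega_\infty$. Lemma~\ref{paraint}(2) says that $U_{-}$ together with the attracting petals of $g$ and $\{0\}$ forms a full neighborhood of $0$; translated back, $(\C\setminus\Omega_\infty)\cap\{|z|>R\}\subset 1/U_{-}$ for $R$ large, hence $\Lambda(\C\setminus\Omega_\infty)\le \Lambda(D(R))+\Lambda(1/U_{-})<\infty$. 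This is the concrete content of your ``absorbing the tail'' remark; once stated, your proof coincides with the paper's.
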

\begin{proof} Assume the contrary and choose $A\subset \Omega$ such that $\mu(A)>0$. Since $\Omega$ is not periodic, $f^{-n}(A)\cap f^{-m}(A)=\emptyset$ for all non-negative $n\neq m$.
Hence, by Step II, $\Lambda(\cup_{n\ge 0}f^{-n}(A))=\int_{\cup_{n\ge 0}f^{-n}(A)}|H(z)|d\sigma_z=\infty$.
In the case (1.1) $H$ is integrable on $\bar\C$, a contradiction.
Consider the case (1.2). If $|\sigma|>1$ then $\infty$ is an attracting fixed point of $f$. Hence, all $f^{-n}(A)$ stay away from a neighborhood of $\infty$, therefore,
$\Lambda(\cup_{n\ge 0}f^{-n}(A))=\int_{\cup_{n\ge 0}f^{-n}(A)}|H(z)|d\sigma_z<\infty$, a contradiction.
In the remaining two possibilities of the case (1.2),
$\infty$ is a parabolic fixed point. Let $\Omega_\infty$ be its immediate basin. As $\cup_{n\ge 0}f^{-n}(A)\subset \C\setminus \Omega_\infty$, we get a contradiction if show that
\begin{equation}\label{parainf}
\int_{\C\setminus \Omega_\infty}|H(z)|d\sigma_z<\infty.
\end{equation}
To this end, passing to $f^q$ we get:
either $f^q(z)=z+O(1/z)$ or $f(z)=z+O(1)$ and $A=0$. Now, making the change $w=1/z$ we arrive at a map
$g(w)=1/f(1/w)$ and $|H(z)|d\sigma_z=|h(w)|d\sigma_w$ where $h(w)=H(1/w)w^{-4}$ precisely as in Lemma~\ref{paraint} of Step IV. Let $U_{-}$ be the set appeared in (2) of that Lemma. Since $\C\setminus\Omega_\infty\cap\{|z|>R\}\subset 1/U_{-}$ for $R$ big enough, (\ref{parainf}) follows.
\end{proof}
{\bf VI}. We are left with the case when $\Omega$ is a periodic component of $F(f)$.
%of some period $q\ge 1$.
\begin{lem}\label{vi}
\begin{enumerate}
\item[(1)] If $\Omega$ is not a Herman ring then $H|_{\Omega}=0$.
\item[(2)] (cf. \cite{Le1}, p.190; \cite{Astorg}) If $\Omega$ is a Herman ring and $H|_{\Omega}\neq 0$ then
there is $C_\Omega\neq 0$ so that,
%for $k=0,1,\cdots,q-1$, $H|_{f^k(A)}=C_a\{\frac{(\varphi_A\circ f^{-k})'}{\varphi_A\circ f^{-k}}\}^2$
$H|_\Omega=C_\Omega\{\frac{\varphi_\Omega'}{\varphi_\Omega}\}^2$ where $\varphi_\Omega: \Omega\to \{1<|w|<R_\Omega\}$ is a conformal isomorphism. Moreover, $C_\Omega=C_{f^i(\Omega)}$ for all $i>0$,i.e. depends only on the cycle that contains $\Omega$. Moreover, $\partial\Omega\subset K$.
%and $f^{-k}: f^k(A)\to A$ is an inverse branch of $f^k$.
%Furthermore, $\partial\Omega\subset K$ and $\varphi_\Omega'\in L^r(dxdy)$ for $2\le r <4$.
\end{enumerate}
\end{lem}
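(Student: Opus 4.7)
The strategy is to use the functional equation from Lemma~\ref{iv}: on $\Omega$, where $H$ is holomorphic, the ratio $L_z=H(f(z))(f'(z))^2/H(z)$ is a real positive holomorphic function off the discrete zero set of $H$, hence a constant $L\ge 1$; iterating,
\[
H(f^n(z))\bigl((f^n)'(z)\bigr)^2=L^n H(z),\qquad z\in\Omega.
\]
The argument then divides into cases according to the type of periodic Fatou component.

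For part (1), assume $H|_\Omega\not\equiv 0$ toward contradiction. In the (super)attracting or parabolic case, let $\alpha\in\overline\Omega$ be the corresponding fixed point of $f^p$; for $z$ in the appropriate basin or attracting petal, $f^{np}(z)\to\alpha$ and $(f^{np})'(z)\to 0$ (exponentially when $\alpha$ is (super)attracting, and polynomially when $\alpha$ is parabolic, via the local analysis of Step~IV). Since $\alpha\in F(f)\setminus K$, $H$ is holomorphic and finite at $\alpha$, so the left-hand side of the iterated identity tends to $0$ along $n\mapsto np$, whereas the right-hand side is bounded below in absolute value by $|H(z)|>0$, a contradiction. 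For a Siegel disk, a uniformization $\phi\colon\Omega\to\D$ conjugates $f^p$ to the irrational rotation $R_\lambda(w)=\lambda w$ (with $\lambda=e^{2\pi i\theta}$, $\theta$ irrational); pushing the quadratic differential $H\,dz^2$ forward to $\tilde H(w)\,dw^2$ on $\D$ converts the invariance into $\tilde H(\lambda w)\lambda^2=L^p\tilde H(w)$, so the Taylor coefficients satisfy $a_n(\lambda^{n+2}-L^p)=0$. Since $L^p>0$ is real but $\lambda^{n+2}$ is never a positive real for $n\ge 0$, all $a_n$ must vanish, giving $\tilde H\equiv 0$, a contradiction.

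For part (2), the same pushforward applied in a Herman ring $\Omega$ with uniformization $\varphi_\Omega\colon\Omega\to\{1<|w|<R_\Omega\}$ yields a Laurent expansion; the relation $a_n(\lambda^{n+2}-L^p)=0$ now admits the unique nontrivial solution $n=-2$, $L^p=1$, which in view of $L\ge 1$ forces $L=1$ and $\tilde H(w)=a_{-2}/w^2$. Unwinding, $H|_\Omega=C_\Omega(\varphi_\Omega'/\varphi_\Omega)^2$ with $C_\Omega=a_{-2}\ne 0$. Cycle invariance of $C_\Omega$ is a consequence of $L=1$: the quadratic differential $H\,dz^2$ is then $f$-invariant on $\bigcup_i f^i(\Omega)$. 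Choosing uniformizations $\varphi_i\colon f^i(\Omega)\to\{1<|w|<R_\Omega\}$ compatibly with the dynamics (i.e., $\varphi_{i+1}\circ f=\varphi_i$, which is possible because $f$ is a conformal isomorphism of consecutive components preserving the modulus), the logarithmic differentials $(\varphi_i'/\varphi_i)^2\,dz^2$ are themselves $f$-invariant (since $(d\log w)^2$ is rotation-invariant), so comparing the two $f$-invariant differentials yields $C_{f^i(\Omega)}=C_\Omega$ for all $i$.

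For the inclusion $\partial\Omega\subset K$, suppose for contradiction that some $z_0\in\partial\Omega\setminus K$ exists; since $H$ is nonvanishing on $\Omega$ and extends holomorphically across $\partial\Omega\setminus K$, a generic such $z_0$ satisfies $H(z_0)\ne 0$. On a simply connected neighborhood $U$ of $z_0$ avoiding $K$ and the zeros of $H$, a branch of $\sqrt{H/C_\Omega}$ is holomorphic and agrees with $\varphi_\Omega'/\varphi_\Omega$ on $U\cap\Omega$; integrating and exponentiating produces a holomorphic extension $\Phi\colon U\to\C$ of $\varphi_\Omega$, which is a local biholomorphism at $z_0$ because $\Phi'(z_0)\ne 0$. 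The identity $\varphi_\Omega\circ f^p=\lambda^p\,\varphi_\Omega$, valid on $U\cap\Omega$, is then bootstrapped along $f^p$-orbits to propagate $\Phi$ to a two-sided neighborhood of $z_0$ where $f^p=\Phi^{-1}\circ R_{\lambda^p}\circ\Phi$; iterates $\{f^{np}\}$ are consequently uniformly bounded there and form a normal family, placing $z_0\in F(f)=F(f^p)$ and contradicting $z_0\in\partial\Omega\subset J(f)$. The main obstacle is precisely in this last step: since $f^p(z_0)$ is typically a distant point of $\partial\Omega$ rather than itself inside $U$, the extension $\Phi$ must be propagated around the entire $f^p$-orbit of $z_0$ on $\partial\Omega$, with consistency verified via the identity principle and the original functional equation inside $\Omega$.
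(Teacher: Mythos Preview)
Your treatment of the rotation-domain cases (Siegel disk and the formula for $H|_\Omega$ on a Herman ring) is essentially the paper's argument, with the pleasant addition that you track the constant $L\ge 1$ from Lemma~\ref{iv} and recover $L=1$ from the Laurent analysis, whereas the paper obtains $L=1$ directly by observing that the non-periodic preimages contribute $0$ to $T_{f^q}H=H$.

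There are, however, two genuine gaps.

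\textbf{Part (1), attracting/parabolic basins.} Your pointwise argument asserts that $(f^{np})'(z)\to 0$ and that ``$\alpha\in F(f)\setminus K$''. Both fail in cases the lemma must cover. Under the normalisation $f(z)=\sigma z+b+O(1/z)$ of case~(1.2), the point $\alpha=\infty$ is a fixed point with $|\sigma|>1$ (attracting) or $\sigma$ a root of unity (parabolic); in affine coordinates $(f^{np})'(z)\to \sigma^{np}$, which need not tend to $0$ and in fact diverges when $|\sigma|>1$, so the left side $H(f^{np}(z))\bigl((f^{np})'(z)\bigr)^2$ is an indeterminate $0\cdot\infty$ and your contradiction evaporates (e.g.\ if $A=\mu(\C)\neq 0$ the product behaves like $-A\sigma^{np}/z$). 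For a finite parabolic $\alpha$ one has $\alpha\in J(f)$, not $F(f)$, so $\alpha$ may lie in $K=\mathrm{supp}(\mu)$ and $H$ need not be bounded along the approaching orbit; you do not address this. The paper avoids both issues by a different mechanism: it uses the $f$-invariant measure $\Lambda(A)=\int_A|H|\,d\sigma$ from Lemma~\ref{iii} and exhibits, for each type of basin (including $\alpha=\infty$, via Lemma~\ref{paraint}), a set $X\subset\Omega$ with $\Lambda(X)<\infty$ and $X\triangle f^{-q}(X)$ containing a ball, forcing $H\equiv 0$ on that ball.

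\textbf{The inclusion $\partial\Omega\subset K$.} Your extension-of-$\varphi_\Omega$ strategy is not completed: as you yourself note, conjugating $f^p$ to a rotation near $z_0$ requires propagating $\Phi$ along the whole $f^p$-orbit of $z_0$ in $\partial\Omega$, which would need every orbit point to lie outside $K$---something you have not established from the single hypothesis $z_0\notin K$. The paper's argument is much shorter and bypasses this: if $z_0\in\partial\Omega\setminus K$, then $H$ is holomorphic on a neighbourhood $V$ of $z_0$; since $z_0\in J(f)$ and there exist non-periodic Fatou components (preimages of the Herman cycle outside the cycle), one finds $z_n\to z_0$ with each $z_n$ in a non-periodic component, whence $H(z_n)=0$ by Lemma~\ref{v} and $H|_V\equiv 0$ by the identity theorem, contradicting $H|_{\Omega}\neq 0$.
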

\begin{proof} Let $q$ be so that $f^q(\Omega)=\Omega$ and $H|_\Omega\neq 0$, i.e., $H$ is a non-zero holomorphic function on a connected open set $\Omega$.
First, let $\Omega$ be an immediate basin of attraction of either attracting or parabolic point $a\in\overline\Omega$. To prove that $H|_\Omega=0$ it is enough to find an open set $X\subset\Omega$ such that $\Lambda(X)<\infty$ and either (i) $X\subset f^{-q}(X)$, $f^{-q}(X)\setminus X$ contains a ball or (ii)
$X\supset f^{-q}(X)$, $X\setminus f^{-q}(X)$ contains a ball. Indeed, then
$\Lambda(f^{-q}(X)\setminus X)=\Lambda(f^{-q}(X))-\Lambda(X)=0$ in case (i) and $\Lambda(X\setminus f^{-q}(X))=0$ in case (ii), hence, in either case $H=0$ on a ball in $\Omega$, hence, everywhere in $\Omega$. Let us show that such a set $X$ exists. If $a$ is attracting or parabolic and $a\neq\infty$, $X$ can be taken a neighborhood of $a$ if $a$ attracting and an attracting petal at $a$ if $a$ parabolic. If $a=\infty$ attracting (i.e., $|\sigma|>1$), define $X=\Omega\setminus U$ where $U$ is a neighborhood of $\infty$. Finally, if $a=\infty$ parabolic, by Lemma~\ref{paraint} define $X=1/U_P$ where $P$ is an attracting petal at $0$ of
$g(z)=1/f(1/z)$.

Now, let $\Omega$ be either a Siegel disk or a Herman ring. Since $f^q:\Omega\to\Omega$ is a homeomorphism, by Step V, $H$ is a holomorphic non-zero map on $\Omega$ such that $H\circ f^q [(f^q)']^2=H$ on $\Omega$.
Let $\varphi_\Omega:\Omega\to \Delta$ be a conformal homeomorphism onto either a disk (if $\Omega$ is a Siegel disk) or an annulus $\Delta$ (if $\Omega$ is a Herman ring) which conjugates $f^q:\Omega\to\Omega$ with
an irrational rotation $w\mapsto\lambda w$ on $\Delta$. For $\tilde H=H\circ\varphi_\Omega^{-1} [(\varphi_\Omega^{-1})']]^2$, the equation for $H$ turns into: $\tilde H(\lambda w)\lambda^2=\tilde H(w)$, $w\in \Delta$. Writing this equation in
terms of a series $\tilde H(w)=\sum_{n=-\infty}^\infty a_n w^n$, it is immediate that the only solution is $\tilde H(w)=a_{-2}/w^2$, that is, the
case of a disk (when $a_n=0$ for $n<0$) is impossible, while if $\Delta$ is an annulus,
$H$ must be proportional to $\{\frac{\varphi_\Omega'}{\varphi_\Omega}\}^2$. Moreover, since $H|_\Omega\neq 0$, every point $z\in\partial\Omega$ must belong to $K$. Indeed, otherwise
$H$ is a holomorphic function in a neighborhood $V$ of $z$ such that $H\neq 0$ in $\Omega\cap V$. On the other hand $z=\lim_{n\to\infty}z_n$ for a sequence $z_n$ of points of some non-periodic components of $F(f)$,
hence, by Lemma~\ref{v}, $H(z_n)=0$ and by the Uniqueness Theorem, $H|_V=0$, a contradiction.
%Finally, $H=C_\Omega\{\varphi_\Omega'/\varphi_\Omega\}^2\in L^t(dxdy)$ for $1\le t <2$
%because $H$ is a Cauchy transform of a finite measure, hence, is locally in $L^t(dxdy)$ for $1\le t<2$.
\end{proof}
By Steps V-VI, $H=0$ outside $K\cup\mathcal{H}_K$.

{\bf VII}. $\mathcal{H}(K)=\emptyset$, i.e., $K$ contains no boundaries of Herman rings. Then by Steps V-VI, $H=\hat\mu=0$
off $K$. Assume (CL), i.e., $K$ is a C-compact. Then, by (well-known) Lemma 5.3 of \cite{Le}, $\mu=0$,
%i.e., $H=0$ everywhere,
%the measure $m=\sum_{k\ge 0}\alpha_k\delta_{b_k}=0$,
a contradiction.

{\bf VIII}: $\mathcal{H}(K)\neq\emptyset$. Assuming (CL), i.e., $K$ is a C-compact, by Lemma \ref{remov}, $\mu=0$ in $\C\setminus\mathcal{H}_K$, i.e., $K=supp(\mu)\subset\partial\mathcal{H}_K$
(cf. with the proof of Theorem 1 of \cite{lll}).
By Lemma \ref{vi}(2), for each $A\in\mathcal{H}_K$,
$H|_A=\hat\mu|_A=C_A\{\varphi_A'/\varphi_A\}^2$, therefore,
\begin{equation}\label{viii}
\hat\mu\circ\psi_A(w) \psi_A'(w)=C_A\{w^2\psi_A'(w)\}^{-1}.
\end{equation}
%Now, as $K$ is nowhere dense, $\hat K:=K\cup\mathcal{H}_K$ is a proper compact subset of $\overline\C$.
Now we also assume (AL). There are two cases. If $\mathcal{H}_K$ is bounded, we directly apply Proposition \ref{rings} to $E=K$, $\mathcal{H}=\mathcal{H}(K)$, and $\nu=\mu$, and get the desired conclusion. If $\infty\in\mathcal{H}_K$, let $M(z)=1/(x_0-z)$ for some $x_0\in\C\setminus\mathcal{H}_K$ and let
$E=M(K)$, $\mathcal{H}=\{M(A)| A\in\mathcal{H}_K\}$, and $\nu$ is defined by
$d\nu(u)=ud\mu(M^{-1}(u))$ so that $\hat\nu(v)=v^{-1}\hat\mu(M^{-1}(v))=0$ off the bounded set $E\cup V\subset\C$. Now we can apply Proposition \ref{rings} to the rational map $M\circ f\circ M^{-1}$,
its collection of bounded Herman rings $\mathcal{H}=\{B=M(A)| A\in\mathcal{H}_K\}$, the set $E$ and the measure $\nu$ just defined. We obtain the representation $\nu=\sum_{\{B\in\mathcal{H}\}}\nu_B$ where
$\nu_B$ is supported on $\partial B$ and absolutely continuous w.r.t. the harmonic measure
$\omega_B$ of $B$, i.e., $d\nu_B=h_B d\omega_B$ where $h_B\in L^1(\partial B, \omega_B)$.
By the connection of $\mu$ and $\nu$ and since $d\omega_{M(a)}(M(z))=M'(z)d\omega_A(z)$, we get the
representation (\ref{repr}), as in the conclusion of part Lemma \ref{Tcauchy}, part {\bf 1}.
That $1/\psi_A'\in H^1$ provided $C_A\neq 0$
%If all $A$ are bounded, (\ref{hardy})
follows from (\ref{viii}) as well.

Part {\bf 1} has been proved.

Let us prove Part {\bf 2}.  After perhaps a Mobius change, one can assume that all Herman rings $B\in\mathcal{H}$ are bounded subsets of $\C$.
Let us show that $1/\psi_B'\in H^1$ for all $B\in\mathcal{H}$. By a condition this holds for $B=A$. Let now $B=f^j(A)$ for some $j\in\{1,\cdots,q-1\}$. Then $\psi_B=f^j\circ\psi_A:\Delta_A\to B$ and, for an appropriate $\lambda\in S^1$ and all
$w\in\Delta_A$,
$$|\psi_B'(w)|=|(f^j)'(\psi_A(w))\psi_A'(w)|=|\frac{\psi_A'(w)(f^q)'(\psi_A(w))}{(f^{q-j})'(f^j(\psi_A(w)))|}=$$
$$\frac{|\lambda\psi_A'(\lambda w)|}{|(f^{q-j})'(f^j(\psi_A(w)))|}\ge M^{-(q-j)}|\psi_A'(\lambda w)|$$ where
$M=\sup\{|(f'(z)|: z\in\cup_{B\in\mathcal{H}}B\}<\infty$. We get immediately that $1/\psi_B'\in H^1$ as well.
Now the existence of the measure $\mu$
follows easily from (\ref{viii}) if we apply \cite{lll}, Theorem 1,{\bf P2} where we take
$\Omega_i=f^{i-1}(A)$, $i=1,\cdots,q-1$, and $\kappa_i=(\varphi_i'/\varphi_i)^2$
where $\varphi_i:f^{i-1}(A)\to \{1<|w|<R\}$ is a conformal homeomorphism. As for the uniqueness,
if $\nu$ is another measure as in Part {\bf 2}, by (\ref{viii}), there is $C\in\C$ such that, for the measure $\tau:=\nu-C\mu$, $\hat\tau=0$ off $K:=\cup\{\partial B: B\in\mathcal{H}\}$. On the other hand,
$K$ is a C-compact because every $x\in K$ lies at the boundary of one of the components $B\in\mathcal{H}$
of its complement. Hence, $\tau=0$.

%\begin{theo}
%Suppose $f$ is as in the Main Theorem though we don't put any restriction on the compact $K$, but ASSUME that (b) is not the case for $f$ (for example, $f$ satisfies one of the condition (1)-(7) of the previous Comment~\ref{c1} where one should take $K=\cup_{j=1}^r \omega(c_j)$). Then either the conclusion of the Main Theorem remains valid or $K$ has a positive area and carries a non-trivial invariant linefield and, therefore,
%there is a non-trivial family of rational maps through $f$ so that each of them is conjugate to $f$
%by a quasi-conformal homeomorphism which is conformal off $J(f)$.
%\end{theo}
%\begin{proof} If $K$ has zero area, then $K$ is a C-compact and its conclusion holds.
%Now, assume $K$ has positive area. Since $H$ is the Cauchy transform of a non-trivial discrete  measure and $H=0$ off $K$, we conclude that $H$ is non-zero a.e. on $K$. Then $\overline{H(z)}/|H(z)|d\bar{z}/dz$ defines an invariant linefield a.e. on $K$, see (IV).
%\end{proof}

{\it Case (8) of Corollary \ref{c1}}. So assume that every critical point in $J(f)$ is summable.
If $J(f)\neq\bar\C$, then by \cite{R-L}, $J(f)$ is of measure zero and $f$ has no Herman rings, hence,
$K\subset J(f)$ is a C-compact and $\mathcal{H}_K$ is empty, and Lemma \ref{Tcauchy} applies.
Let $J(f)=\bar\C$, in particular, $f$ has no Herman rings. Assume $H$ is non-trivial. By Step III of the above proof of Lemma \ref{Tcauchy}, $f$ admits a non-trivial invariant line field on a forward invariant set of a positive Lebesgue measure (which is the set $Z$ minus forward orbits of critical points).
%If $K$ is of area zero, $K$ is a C-compact, and we are done again. Let $K$ is of a positive area (for example, $K=\bar\C$). By Step III of the proof of Lemma \ref{Tcauchy},
%assuming $H$ is non-trivial????? $f$ admits a non-trivial line field on $K$.
On the other hand, by \cite{RS} (see also \cite{Pr}), Lebesgue almost every point of $\C$ is conical for $f$, which leads (as in \cite{mcm}) to the existence of a holomorphic line field for $f$, hence, $f$ is a flexible Lattes example,a contradiction.

\subsection*{Acknowledgements} We would like to thank Magnus Aspenberg for asking about the proof of Lemma 5.2 in \cite{Le},
Matthieu Astorg for discussions, and the referee for careful reading of the paper and helpful comments.

\normalsize

\end{document}